\newtheorem{theorem}{Theorem}[section]
\newtheorem{proposition}[theorem]{Proposition}
\newtheorem{corollary}[theorem]{Corollary}
\newtheorem{conjecture}[theorem]{Conjecture}
\newtheorem{definition}[theorem]{Definition}
\newtheorem{example}[theorem]{Example}
\newtheorem{lemma}[theorem]{Lemma}
\newtheorem{remark}[theorem]{Remark}
\newtheorem{question}[theorem]{Question}
\newcommand{\inte}{{\rm int}}
\newcommand{\link}{{\rm link}}
\newcommand{\sd}{{\rm sd}}
\newcommand{\aA}{{\mathcal A}}
\newcommand{\cC}{{\mathcal C}}
\newcommand{\iI}{{\mathcal I}}
\newcommand{\kk}{{\mathbf k}}
\newcommand{\RR}{{\mathbb R}}
\newcommand{\sm}{{\smallsetminus}}
\begin{document}
\title[Triangulations and theta polynomials]
{Triangulations of simplicial complexes and 
theta polynomials}

\author{Christos~A.~Athanasiadis}

\address{Department of Mathematics\\
National and Kapodistrian University of Athens\\
Panepistimioupolis\\ 15784 Athens, Greece}
\email{caath@math.uoa.gr}

\thanks{Supported by the Hellenic Foundation 
for Research and Innovation (H.F.R.I.) under the 
`2nd Call for H.F.R.I. Research Projects to 
support Faculty Members \& Researchers' 
(Project Number: HFRI-FM20-04537)}
\thanks{ \textit{Mathematics Subject 
Classifications}: Primary: 05E45; 
                  Secondary: 13C14, 55U10}
\thanks{ \textit{Key words and phrases}. 
Triangulation, homology ball, homology sphere, 
theta polynomial, local $h$-polynomial, 
unimodality, $\gamma$-positivity}

\begin{abstract}
An enumerative theory of triangulations of 
simplicial complexes has been developed by Stanley.
A key role in his theory is played by 
the local $h$-polynomial of a triangulation of
a simplex. This paper develops a parallel theory, 
in which the role of the local $h$-polynomial is 
played by a simpler invariant, namely the theta 
polynomial. This allows one to deduce unimodality 
and gamma-positivity properties of $h$-polynomials 
of triangulations of simplicial complexes from 
corresponding properties of theta polynomials,
which are studied here in some detail. To mention 
one concrete application, the $h$-polynomial of 
the antiprism triangulation of any simplicial 
homology sphere is shown to be gamma-positive, 
thus confirming Gal's conjecture in a new special 
case.
\end{abstract}

\maketitle

\section{Introduction}
\label{sec:intro}

A theory for the face enumeration of triangulations 
$\Delta'$ of a simplicial complex $\Delta$ was 
developed by Stanley~\cite[Part~I]{Sta92}, in order 
to study the behavior of the $h$-polynomials of such 
triangulations (see Section~\ref{sec:enu} for any 
undefined terminology) and their monotonicity 
properties. A major role in this theory is played by 
the concept of the local $h$-polynomial of the 
restriction of $\Delta'$ to a face $F \in \Delta$, 
which appears in Stanley's locality formula 
\cite[Theorem~3.2]{Sta92} (see also 
Theorem~\ref{thm:sta-locality}) as the `local 
contribution' to $h(\Delta',x)$ of $\Delta'$ at $F$.
This paper develops the basics of a parallel theory,
in which the local $h$-polynomial is replaced by a 
simpler enumerative invariant, named theta 
polynomial.

There are good reasons for doing so. Most importantly,
in the analogue of Stanley's locality formula (see 
Theorem~\ref{thm:theta-formula}), local $h$-polynomials 
are replaced by theta polynomials and the $h$-polynomials
of the links of the faces of $\Delta$ are replaced by
the $h$-polynomials of the barycentric subdivisions of
the links of these faces. The latter are known to have 
strong unimodality and $\gamma$-positivity properties. 
These can be transferred to the $h$-polynomial of 
$\Delta'$, provided that the theta polynomials of 
$\Delta'$ at faces of $\Delta$ have similar properties. 
This naturally leads to the study of theta polynomials. 
Theta polynomials do not always have nonnegative 
coefficients (although they do have symmetric 
coefficients). However, mild assumptions (see 
Proposition~\ref{prop:theta-positivity}) imply 
their nonnegativity and their theory works well if 
one is willing to focus on triangulations of $\Delta$ 
which can be roughly thought of as those having at 
least one interior vertex at every nonempty face of 
$\Delta$.

There have been strong indications in the literature
that theta polynomials (this is a terminology first 
adopted in this paper) may be of importance and 
independent interest. First, a locality type 
formula for local $h$-polynomials, in which theta 
polynomials of barycentric subdivisions appear, was 
discovered by Juhnke-Kubitzke, Murai and 
Sieg~\cite[Theorem~4.4]{KMS19} (see also 
Theorem~\ref{thm:KMS}). The special properties of 
these theta polynomials are crucial there to deduce 
that local $h$-polynomials of barycentric 
subdivisions of regular cell decompositions of 
the simplex are $\gamma$-positive. Second, theta 
polynomials play a crucial role in studying the 
real-rootedness of uniform triangulations of 
simplicial complexes \cite[Section~6]{Ath20+} (a 
class of triangulations which includes barycentric 
subdivisions), and antiprism 
triangulations in particular~\cite{ABK22}. Third, 
a unimodality result for theta polynomials 
follows from a recent theorem of Adiprasito 
and Yashfe~\cite[Theorem~50]{AY20} (see
Theorem~\ref{thm:AY20}).

The content and structure of this paper is as 
follows. Section~\ref{sec:enu} includes 
background material on the face enumeration of 
simplicial complexes and their triangulations 
(barycentric subdivisions, in particular). For
simplicity, we consider only geometric 
triangulations of simplicial complexes in this 
paper, although the theory can be suitably 
generalized to the quasi-geometric topological
(simplicial) subdivisions, considered 
in~\cite[Part~I]{Sta92}. Section~\ref{sec:theta}
defines the theta polynomial of any (simplicial)
homology ball $\Delta$ as 
\begin{equation} \label{eq:theta-def}
  \theta(\Delta,x) = h(\Delta,x) - 
                     h(\partial \Delta,x) 
\end{equation} 
and studies its main properties. The monotonicity 
of $\theta(\Delta,x)$ under triangulations of 
$\Delta$ is studied in Section~\ref{sec:monotone}. 
Sections~\ref{sec:theta} and~\ref{sec:monotone} 
include several applications of 
their main results (Theorems~\ref{thm:theta-formula} 
and~\ref{thm:theta-monotone-b}) to the unimodality 
and $\gamma$-positivity of $h$-polynomials and 
local $h$-polynomials of triangulations of 
Cohen--Macaulay simplicial complexes and simplices, 
respectively. Section~\ref{sec:flag} discusses the 
unimodality and $\gamma$-positivity of theta 
polynomials of homology balls and conjectures 
natural conditions under which $\gamma$-positivity 
holds (see Conjecture~\ref{conj:theta-gamma-positivity}). 
This conjecture is shown to be equivalent to the Link 
Conjecture for flag homology spheres, already proposed 
by Chudnovsky and Nevo~\cite{CN20} as a strengthening 
of Gal's conjecture~\cite{Ga05}, a fact which 
advocates for its validity. As an application of the
results of previous sections, Section~\ref{sec:app}
answers some questions about the $\gamma$-positivity
of antiprism triangulations, posed 
in~\cite[Section~8]{ABK22}. In particular, 
$h$-polynomials of antiprism triangulations of 
homology spheres are shown to be $\gamma$-positive; 
this verifies Gal's conjecture in a new special 
case.

\section{Triangulations and face enumeration}
\label{sec:enu}

This section includes definitions and background on
simplicial complexes, triangulations and their 
enumerative invariants which are of primary interest 
in this paper. We denote by $|V|$ the cardinality 
and by $2^V$ the power set of a finite set $V$.

\subsection{Polynomials}
For polynomials $p(x), q(x) \in \RR[x]$ we write $p(x) 
\le q(x)$ if $q(x) - p(x)$ has nonnegative coefficients. 
A polynomial $p(x) = a_0 + a_1 x + \cdots + a_n x^n \in 
\RR[x]$ is called
\begin{itemize}
\item[$\bullet$] 
  \emph{symmetric}, with center of symmetry $n/2$, if 
	$a_i = a_{n-i}$ for all $0 \le i \le n$,
\item[$\bullet$] 
  \emph{unimodal}, with a peak at position $k$, if $a_0 
	\le a_1 \le \cdots \le a_k \ge a_{k+1} \ge \cdots \ge 
	a_n$,
\item[$\bullet$] 
  \emph{$\gamma$-positive}, with center of symmetry $n/2$,
	if $p(x) = \sum_{i=0}^{\lfloor n/2 \rfloor} \gamma_i 
	x^i (1+x)^{n-2i}$ for some nonnegative real numbers 
	$\gamma_0, \gamma_1,\dots,\gamma_{\lfloor n/2 \rfloor}$,
\item[$\bullet$] 
  \emph{real-rooted}, if every root of $p(x)$ is real, or 
	$p(x) \equiv 0$.
\end{itemize}
Every $\gamma$-positive polynomial is symmetric and 
unimodal and every real-rooted and symmetric polynomial
with nonnegative coefficients is $\gamma$-positive; see
\cite{Ath18, Bra15, Sta89} for more information on the 
connections among these concepts.  

Every polynomial $p(x) \in \RR[x]$ of degree at most 
$n$ can be written uniquely in the form $p(x) = a(x) 
+ xb(x)$ for some polynomials $a(x), b(x) \in \RR[x]$ 
of degrees at most $n$ and $n-1$, which are symmetric
with centers of symmetry $n/2$ and $(n-1)/2$, 
respectively. This expression is called the 
\emph{symmetric decomposition} of $p(x)$ with respect 
to $n$; see \cite{AT21, BS20} and references therein. 
We say that this decomposition is \emph{nonnegative, 
unimodal, $\gamma$-positive} or \emph{real-rooted} if 
both $a(x)$ and $b(x)$ have the corresponding property.
We note that every polynomial which has a nonnegative
and unimodal symmetric decomposition with respect 
to $n$ is unimodal, with a peak at $\lfloor (n+1)/2
\rfloor$.

\subsection{Simplicial complexes}
\label{subsec:complexes}
All simplicial complexes we consider will be abstract 
and finite. Thus, given a finite set $\Omega$, a 
\emph{simplicial complex} on the ground set $\Omega$ 
is a collection $\Delta$ of subsets of $\Omega$ such 
that $F \subseteq G \in \Delta \Rightarrow F \in 
\Delta$. The elements of $\Delta$ are called 
\emph{faces}. The
dimension of a face $F$ is defined as one less than 
the cardinality of $F$. The dimension of $\Delta$ is 
the maximum dimension of a face and is denoted by 
$\dim(\Delta)$. Faces of $\Delta$ of dimension zero 
or one are called \emph{vertices} or \emph{edges}, 
respectively, and those which are maximal with 
respect to inclusion are called \emph{facets}. The
\emph{link} of the face $F \in \Delta$ is the 
subcomplex of $\Delta$ defined as $\link_\Delta(F) 
= \{ G \sm F: G \in \Delta, \, F \subseteq G\}$; in 
particular, $\link_\Delta(\varnothing) = \Delta$. 
	
All topological properties of $\Delta$ we mention 
will refer to those of the geometric realization 
of $\Delta$ \cite[Section~9]{Bj95}, uniquely 
defined up to homeomorphism. All homological 
properties of $\Delta$ will be considered with 
respect to a fixed field $\kk$. Thus, $\Delta$
is said to be \emph{Cohen--Macaulay} (over $\kk$) 
if
\[ \widetilde{H}_i \, (\link_\Delta (F), \kk) 
   = 0 \]
for every $F \in \Delta$ and every $i < \dim 
\link_\Delta(F)$, where $\widetilde{H}_*(\Gamma, 
\kk)$ denotes reduced simplicial homology of 
$\Gamma$ (with coefficients in $\kk$). Moreover,
$\Delta$ is called a \emph{homology sphere} 
(over $\kk$) if 
\[ \widetilde{H}_i \, (\link_\Delta (F), \kk) 
    = \begin{cases} \kk, & 
		\text{if $i = \dim \link_\Delta (F)$} \\
    0, & \text{otherwise} 
		\end{cases} \]
for every $F \in \Delta$ and every $i$. An  
$(n-1)$-dimensional simplicial complex $\Delta$ 
is called a \emph{homology ball} (over $\kk$) if 
there exists a subcomplex $\partial \Delta$ of 
$\Delta$, called the \emph{boundary} of $\Delta$, 
with the following properties:
\begin{itemize}
\item[$\bullet$] 
$\partial \Delta$ is an $(n-2)$-dimensional 
homology sphere (over $\kk$),

\item[$\bullet$] 
for every $F \in \Delta$ and every $i$,
\[ \widetilde{H}_i \, (\link_\Delta(F), \kk) = 
   \begin{cases} \kk, & 
	 \text{if $F \notin \partial \Delta$ and $i = \dim
         \link_\Delta (F)$} \\
   0, & \text{otherwise}. \end{cases} \]
\end{itemize}
The \emph{interior} of $\Delta$ is then defined as 
$\inte(\Delta) = \Delta \sm \partial \Delta$. 

We assume familiarity with basic properties of 
Cohen--Macaulay simplicial complexes 
\cite[Section~11]{Bj95} \cite[Chapter~II]{StaCCA} 
and with those of homology 
balls and spheres as explained, for instance, 
in~\cite[Section~2B]{Ath12}. We note, in 
particular, that the complex $\link_\Delta(F)$ is 
Cohen--Macaulay for every Cohen--Macaulay simplicial 
complex $\Delta$ and every $F \in \Delta$. Moreover,
if $\Delta$ is a homology ball or a homology sphere, 
then $\link_\Delta(F)$ is a homology sphere for every 
$F \in \inte(\Delta)$ and for every $F \in \Delta$, 
respectively, and if $\Delta$ is a homology
ball, then $\link_\Delta(F)$ is a homology ball with 
boundary $\partial \, \link_\Delta(F) = 
\link_{\partial \Delta}(F)$ for every $F \in \partial 
\Delta$. Every cone over a homology sphere $\Delta$
(meaning, the simplicial complex $\Delta \cup \{F 
\cup \{v\}: F \in \Delta\}$, where $v$ is a new 
vertex, not in $\Delta$) is a homology ball with 
boundary $\Delta$.
	
Following~\cite{AT21}, we use the term 
\emph{Cohen--Macaulay*} instead of 
\emph{uniformly Cohen--Macaulay} for the class of
simplicial complexes introduced and studied by 
Matsuoka and Murai~\cite{MM16}. Thus, a 
Cohen--Macaulay simplicial complex $\Delta$ (over 
$\kk$) is called \emph{Cohen--Macaulay*} (over $\kk$) 
if the simplicial complex obtained from $\Delta$ by 
removing any of its facets is Cohen--Macaulay 
(over $\kk$) of the same dimension as $\Delta$. This 
class of simplicial complexes
includes all doubly Cohen--Macaulay simplicial 
complexes~\cite[p.~94]{StaCCA} (and, in particular,
all homology spheres). 

A convenient way to record the face numbers of a 
simplicial complex $\Delta$ is the $h$-polynomial, 
defined by the formula
\begin{equation}
\label{eq:hdef}
h(\Delta, x) = \sum_{i=0}^n f_{i-1} (\Delta) \, 
x^i (1-x)^{n-i}, 
\end{equation}
where $f_i(\Delta)$ is the number of $i$-dimensional 
faces of $\Delta$ and $n-1$ is its dimension. The 
sequence $h(\Delta) = (h_0(\Delta), 
h_1(\Delta),\dots,h_n(\Delta))$ is the 
\emph{$h$-vector} of $\Delta$. The polynomial 
$h(\Delta,x)$ has nonnegative coefficients for every 
Cohen--Macaulay complex $\Delta$ (in particular, 
for homology balls and spheres). Moreover, it 
is symmetric, with center of symmetry $n/2$, if 
$\Delta$ is a homology sphere and has the property
that $x^n h(\Delta,1/x) = h(\inte(\Delta),x)$ if 
$\Delta$ is a homology ball, where $h(\inte(\Delta),x)$ 
is defined by the right-hand side of 
Equation~(\ref{eq:hdef}) if $f_{i-1}(\Delta)$ is 
replaced by the number of $(i-1)$-dimensional 
faces of $\inte(\Delta)$. In particular, for every 
homology ball $\Delta$ of dimension $n-1$ we have 
$h_n(\Delta) = 0$ and $h_{n-1}(\Delta)$ is equal to
the number of interior vertices of $\Delta$. We 
refer to~\cite{StaCCA} for the significance and 
for more information on $h$-vectors of 
Cohen--Macaulay simplicial complexes. 

\subsection{Triangulations}
By the term \emph{triangulation} of a simplicial 
complex $\Delta$ we 
will always mean a geometric triangulation. Thus, 
a simplicial complex $\Delta'$ is a triangulation 
of $\Delta$ if there exists a geometric realization 
$K'$ of $\Delta'$ which geometrically subdivides 
a geometric realization $K$ of $\Delta$. The 
\emph{carrier} of a face $F' \in \Delta'$ is the 
smallest face $F \in \Delta$ for which the face 
of $K$ corresponding to $F$ contains the face of 
$K'$ corresponding to $F'$. The restriction of 
$\Delta'$ to a face $F \in \Delta$ is a 
triangulation of the simplex $2^F$ denoted by 
$\Delta'_F$.

The \emph{local $h$-polynomial} of a triangulation 
$\Gamma$ of a simplex $2^V$ was defined by 
Stanley~\cite[Definition~2.1]{Sta92} by the formula 
\begin{equation} \label{eq:def-local-h}
  \ell_V (\Gamma, x) = \sum_{F \subseteq V} 
  \, (-1)^{|V \sm F|} \, h (\Gamma_F, x).
\end{equation}
Stanley~\cite{Sta92} showed 
that $\ell_V (\Gamma, x)$ has nonnegative
coefficients and that it is symmetric, with center
of symmetry $|V|/2$. He exploited these properties 
and the formula of the following theorem in order
to prove nontrivial results about the face 
enumeration of triangulations of a simplicial 
complex $\Delta$. We recall that $\Delta$ is 
called \emph{pure} if all its facets have the 
same dimension.
\begin{theorem} \label{thm:sta-locality} 
{\rm (\cite[Theorem~3.2]{Sta92})}
For every pure simplicial complex $\Delta$ and every 
triangulation $\Delta'$ of $\Delta$,
\begin{equation} \label{eq:sta-locality}
h (\Delta', x) = \sum_{F \in \Delta} 
\ell_F (\Delta'_F, x) \, h (\link_\Delta (F), x).
\end{equation}
\end{theorem}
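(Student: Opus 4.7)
The plan is to compute both sides of \eqref{eq:sta-locality} as linear combinations of the polynomials $h(\Delta'_G, x)$, $G \in \Delta$, and verify that the two sets of coefficients agree. Substituting the definition $\ell_F(\Delta'_F, x) = \sum_{G \subseteq F}(-1)^{|F \sm G|} h(\Delta'_G, x)$ into the right-hand side and swapping the order of summation yields
\[
\sum_{F \in \Delta} \ell_F(\Delta'_F, x) \, h(\link_\Delta F, x) \, = \, \sum_{G \in \Delta} h(\Delta'_G, x) \cdot I(G),
\]
where $I(G) := \sum_{F \in \Delta,\, F \supseteq G} (-1)^{|F \sm G|} h(\link_\Delta F, x)$. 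Writing $F = G \sqcup H$ with $H \in \link_\Delta G$, expanding each $h$-polynomial from its definition, and using the binomial identity $\sum_{H \subseteq L}(-1)^{|H|} x^{|L|-|H|} = (x-1)^{|L|}$ collapses the resulting double sum to the closed form
\[
I(G) \, = \, (1-x)^{n-|G|} \sum_{L \in \link_\Delta G} (-1)^{|L|}.
\]

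For the left-hand side, I would partition each $F' \in \Delta'$ according to its carrier $F \in \Delta$. The faces of $\Delta'$ with carrier $F$ are exactly those faces of $\Delta'_F$ whose carrier in the simplex $2^F$ equals $F$ itself, the \emph{interior} faces of $\Delta'_F$. Writing $\ell_F^\circ(\Delta'_F, x)$ for the generating polynomial $\sum_{F'} x^{|F'|}(1-x)^{|F|-|F'|}$ of these interior faces and factoring out $(1-x)^{n-|F|}$ from the contribution of each carrier produces
\[
h(\Delta', x) \, = \, \sum_{F \in \Delta} (1-x)^{n-|F|} \, \ell_F^\circ(\Delta'_F, x).
\]
Applied to $\Delta'_F$ viewed as a triangulation of the pure complex $2^F$, this identity becomes $h(\Delta'_F, x) = \sum_{G \subseteq F} (1-x)^{|F \sm G|} \ell_G^\circ(\Delta'_G, x)$, which has the shape of a Möbius transform on the Boolean lattice $2^F$; inverting it gives $\ell_F^\circ(\Delta'_F, x) = \sum_{G \subseteq F} (-1)^{|F \sm G|} (1-x)^{|F \sm G|} h(\Delta'_G, x)$. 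Substituting back and interchanging summations rewrites $h(\Delta', x)$ as exactly the same linear combination $\sum_G h(\Delta'_G, x) (1-x)^{n-|G|} \sum_{L \in \link_\Delta G}(-1)^{|L|}$ that was obtained for the right-hand side, completing the proof.

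The main obstacle is really bookkeeping: one must carefully distinguish between subsets of a fixed face of $\Delta$ and faces of $\Delta$ itself, and track the $(1-x)^{|F \sm G|}$ factors that appear when $h$-polynomials on simplices of different dimensions are compared. The only genuinely nontrivial combinatorial input beyond this is the identity $\sum_{H \subseteq L}(-1)^{|H|} x^{|L|-|H|} = (x-1)^{|L|}$ used to evaluate $I(G)$. It is worth noting that no deeper topological hypothesis on $\Delta$ (such as Cohen--Macaulay-ness) is needed; the formula is a purely combinatorial identity whose proof rests only on the definitions of $h$- and local $h$-polynomials and two applications of Möbius inversion on Boolean lattices.
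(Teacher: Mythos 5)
Your proof is correct. Note first that the paper does not prove this statement at all: it is quoted verbatim from Stanley \cite[Theorem~3.2]{Sta92}, so there is no internal proof to compare against; your argument is in fact essentially the standard one (carrier decomposition plus M\"obius inversion on Boolean lattices), close in spirit to Stanley's original. Both halves of your computation check out: the identity $I(G) = (1-x)^{n-|G|}\sum_{L \in \link_\Delta(G)}(-1)^{|L|}$ follows exactly as you say by writing $F = G \sqcup H$, replacing the pair $(H,L)$ with $H \in \link_\Delta(G)$, $L \in \link_\Delta(G \cup H)$ by $M = H \cup L \in \link_\Delta(G)$ together with the subset $H \subseteq M$, and applying the binomial identity; and the carrier decomposition $h(\Delta',x) = \sum_F (1-x)^{n-|F|}\ell^\circ_F(\Delta'_F,x)$ together with its inversion $\ell^\circ_F(\Delta'_F,x) = \sum_{G\subseteq F}(-1)^{|F\sm G|}(1-x)^{|F\sm G|}h(\Delta'_G,x)$ reproduces the same coefficients. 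One point worth making explicit, since your closing remark slightly glosses over it: the purity hypothesis is genuinely used, namely in the step where you expand $h(\link_\Delta(G\cup H),x)$ as $\sum_L x^{|L|}(1-x)^{n-|G|-|H|-|L|}$ --- this requires $\link_\Delta(G\cup H)$ to have dimension exactly $n-|G|-|H|-1$, which holds because links of faces in a pure complex are pure of the expected dimension. You are right that no Cohen--Macaulayness or other topological input is needed; the identity is purely an exercise in inclusion--exclusion once the carrier map is in place.
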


Stellar subdivisions provide a simple way to 
triangulate a simplicial complex $\Delta$. Given 
a face $F \in \Delta$, the \emph{stellar 
subdivision} of $\Delta$ on $F$ (with new vertex 
$v$) is the simplicial complex obtained from 
$\Delta$ by removing all its faces containing 
$F$ and adding all sets of the form $\{v\} \cup
E \cup E'$, where $E \subsetneq F$ and $E' \in 
\link_\Delta(F)$. 

\subsection{Barycentric subdivisions}
\label{subsec:bary}
The \emph{barycentric subdivision} of a simplicial 
complex $\Delta$ is defined as the simplicial 
complex $\sd(\Delta)$ on the vertex set $\Delta 
\sm \{\varnothing\}$ whose faces are the chains 
$F_0 \subsetneq F_1 \subsetneq \cdots \subsetneq 
F_k$ of nonempty faces of $\Delta$. This simplicial 
complex is naturally a triangulation of $\Delta$; 
the carrier of the chain $F_0 \subsetneq F_1 
\subsetneq \cdots \subsetneq F_k$ is its top 
element $F_k \in \Delta$. As a result, 
$\partial (\sd(\Delta)) = \sd(\partial \Delta)$
for every homology ball $\Delta$. 

The face enumeration of $\sd(\Delta)$ was studied 
by Brenti and Welker~\cite{BW08}. We summarize 
some of their results in the form of the following
proposition. 
\begin{proposition} [\cite{BW08}] 
\label{prop:BW08}
For integers $0 \le k \le n$ there exist 
polynomials $p_{n,k}(x)$ of degree at most $n$ 
with nonnegative coefficients, such that 
\begin{equation} \label{eq:BW08a}
h (\sd(\Delta), x) = \sum_{k=0}^n h_k
(\Delta) p_{n,k}(x) 
\end{equation}
for every $(n-1)$-dimensional simplicial 
complex $\Delta$. The $p_{n,k}(x)$ satisfy the 
recurrence 
\begin{equation} \label{eq:BW08b}
p_{n,k}(x) = x \sum_{i=0}^{k-1} p_{n-1,i}(x) + 
\sum_{i=k}^n p_{n-1,i}(x) 
\end{equation}
for every $n \ge 1$ and all $0 \le k \le n$, with 
the initial condition $p_{0,0}(x) = 1$, and have 
the property that
\begin{equation} \label{eq:BW08c}
p_{n,n-k}(x) = x^n p_{n,k}(1/x)
\end{equation}
for $0 \le k \le n$.
\end{proposition}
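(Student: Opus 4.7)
My plan is to define the polynomials $p_{n,k}(x)$ as the unique solutions of the recurrence (\ref{eq:BW08b}) with initial condition $p_{0,0}(x) = 1$ and the boundary convention $p_{m,m+1}(x) \equiv 0$ for all $m \ge 0$, and then prove the three assertions separately. The nonnegativity of coefficients and the degree bound $\deg p_{n,k}(x) \le n$ are immediate by induction on $n$, since the recurrence presents $p_{n,k}(x)$ as a nonnegative combination of polynomials of the form $p_{n-1,i}(x)$ and $x\,p_{n-1,i}(x)$.

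For (\ref{eq:BW08a}), I would first show that $h(\sd(\Delta),x)$ depends linearly on the $h$-vector of $\Delta$. The key combinatorial input is that a $(j-1)$-face of $\sd(\Delta)$ is a chain $F_1 \subsetneq \cdots \subsetneq F_j$ of nonempty faces of $\Delta$, and the number of such chains with prescribed top face $F_j$ of cardinality $d$ equals the number of ordered set partitions of a $d$-element set into $j$ blocks, a universal constant depending only on $d$ and $j$. Consequently $f_{j-1}(\sd(\Delta))$ is a universal linear combination of the $f_{d-1}(\Delta)$'s, and hence $h(\sd(\Delta),x) = \sum_k h_k(\Delta)\, \widetilde p_{n,k}(x)$ for uniquely determined polynomials $\widetilde p_{n,k}(x)$. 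To identify $\widetilde p_{n,k}(x) = p_{n,k}(x)$, I would evaluate both sides on a family of test complexes whose $h$-vectors span $\RR^{n+1}$ (for instance, the $(n-1)$-skeleta of $2^{[N]}$ for $N = n, n+1, \dots, 2n$) and verify the recurrence (\ref{eq:BW08b}) via an explicit Stirling-number formula; alternatively, one can apply Theorem~\ref{thm:sta-locality} to $\Delta' = \sd(\Delta)$ and exploit that $\sd(\Delta)_F = \sd(2^F)$ depends only on $|F|$, which reduces the verification to an inductive computation of local $h$-polynomials of simplices.

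The symmetry (\ref{eq:BW08c}) is then a clean induction on $n$ at the polynomial level. Assuming $x^{n-1} p_{n-1,i}(1/x) = p_{n-1,n-1-i}(x)$ for all $i$, one writes
\[
x^n p_{n,k}(1/x) \;=\; \sum_{i=0}^{k-1} x^{n-1} p_{n-1,i}(1/x) \;+\; x \sum_{i=k}^{n-1} x^{n-1} p_{n-1,i}(1/x),
\]
applies the inductive hypothesis to each summand, and re-indexes via $j = n-1-i$ to recover exactly the right-hand side of (\ref{eq:BW08b}) for $p_{n,n-k}(x)$; the convention $p_{n-1,n}(x) \equiv 0$ ensures that the boundary index causes no trouble.

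The main obstacle throughout is the identification $\widetilde p_{n,k}(x) = p_{n,k}(x)$ inside the proof of (\ref{eq:BW08a}): matching the polynomials produced by inverting the $f$-to-$h$ change of basis on an expression involving Stirling numbers with those defined by the recurrence is the only nontrivial calculation, and everything else reduces to a routine induction or to standard chain counting in the face poset of $\Delta$.
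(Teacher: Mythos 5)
The paper does not prove this proposition at all: it is quoted verbatim from Brenti--Welker \cite{BW08}, so there is no internal argument to compare yours against. Judged on its own terms, your architecture is reasonable and two of the three assertions are genuinely handled: defining $p_{n,k}(x)$ by the recurrence (\ref{eq:BW08b}) with the boundary convention $p_{m,m+1}(x)\equiv 0$ --- a convention that is in fact needed to make sense of the upper summation limit $i=n$, and which you correctly supply --- makes the nonnegativity and the degree bound automatic, and your induction for the symmetry (\ref{eq:BW08c}) is complete and correct (I checked the reindexing $j=n-1-i$; the two resulting sums match the recurrence for $p_{n,n-k}(x)$ exactly). The existence of \emph{some} universal polynomials $\widetilde p_{n,k}(x)$ satisfying (\ref{eq:BW08a}) is also correctly argued: counting chains with prescribed top face shows $f_{j-1}(\sd(\Delta))$ is a fixed linear combination of the $f_{d-1}(\Delta)$, and the $f$-to-$h$ change of basis is invertible.

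The gap is the identification $\widetilde p_{n,k}(x)=p_{n,k}(x)$, which you name but do not carry out, and which is where essentially all of the content of (\ref{eq:BW08a})--(\ref{eq:BW08b}) lives: without it you have two families of polynomials, one satisfying the recurrence and one satisfying the subdivision identity, and no link between them. Neither of your two proposed routes is executed far enough to see that it closes. The test-complex route requires (i) a proof that the $h$-vectors of the chosen skeleta span $\RR^{n+1}$ and (ii) a computation of $h(\sd(\cdot),x)$ for each of them together with a verification against the recurrence-defined polynomials --- but (ii) is exactly the nontrivial Stirling-number computation again, merely relocated. The locality route via Theorem~\ref{thm:sta-locality} expresses $h(\sd(\Delta),x)=\sum_j d_j(x)\sum_{|F|=j}h(\link_\Delta(F),x)$ and does yield universal coefficients, but deriving the specific recurrence (\ref{eq:BW08b}) from that expression still requires the binomial/Stirling manipulation you defer. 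So as written this is a correct and well-organized plan for two thirds of the proposition, plus an accurate diagnosis of where the remaining work is, rather than a proof of it.
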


\smallskip
For example,
\[	p_{4,k} (x) = \begin{cases}
    1 + 11x + 11x^2 + x^3, & \text{if $k=0$} \\
    8x + 14x^2 + 2x^3, & \text{if $k=1$} \\
    4x + 16x^2 + 4x^3, & \text{if $k=2$} \\
    2x + 14x^2 + 8x^3, & \text{if $k=3$} \\
		x + 11x^2 + 11x^3 + x^4, & \text{if $k=4$}. 
\end{cases} \]

\smallskip
Brenti and Welker~\cite[Theorem~2]{BW08} showed 
that $h(\sd(\Delta),x)$ is real-rooted, hence 
unimodal, for every Cohen--Macaulay simplicial
complex $\Delta$ (more generally, for every
Boolean cell complex $\Delta$ with nonnegative 
$h$-vector). The location of the peak of 
$h(\sd(\Delta),x)$ was studied by 
Kubitzke--Nevo~\cite{KN09} and 
Murai--Yanagawa~\cite{MY14} using methods of 
commutative algebra, and by Murai~\cite{Mu10} 
and Beck--Jochemko--McCullough 
\cite[Section~3]{BJM19}, using combinatorial 
methods. The following statement is a more 
detailed version of \cite[Corollary~4.7]{KN09} 
(and applies to Boolean cell complexes with 
nonnegative $h$-vector as well), which we will 
use in Section~\ref{sec:theta} to obtain a more 
general result.
\begin{proposition} \label{prop:h-sd}
Let $\Delta$ be any $(n-1)$-dimensional 
Cohen--Macaulay simplicial complex. Then, 
$h(\sd(\Delta), x)$ can be written as a sum of 
three polynomials with nonnegative, symmetric and 
unimodal coefficients and centers of symmetry 
$(n-1)/2$, $n/2$ and $(n+1)/2$, respectively.

In particular, $h(\sd(\Delta),x)$ is unimodal, 
with a peak at position $n/2$, if $n$ is even, 
and at $(n-1)/2$ or $(n+1)/2$, if $n$ is odd. 
\end{proposition}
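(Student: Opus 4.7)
My plan is to apply the Brenti--Welker formula of Proposition~\ref{prop:BW08} and reduce the proposition to a structural claim about the polynomials $p_{n,k}(x)$ themselves. Since $\Delta$ is Cohen--Macaulay, all $h_k(\Delta)$ are nonnegative, so~(\ref{eq:BW08a}) gives $h(\sd(\Delta),x) = \sum_{k=0}^n h_k(\Delta)\, p_{n,k}(x)$, and it suffices to prove the key claim: for every $0 \le k \le n$ there exist nonnegative, unimodal and symmetric polynomials $A_{n,k}(x)$, $B_{n,k}(x)$ and $C_{n,k}(x)$, with centers of symmetry $(n-1)/2$, $n/2$ and $(n+1)/2$ respectively, such that $p_{n,k}(x) = A_{n,k}(x) + B_{n,k}(x) + C_{n,k}(x)$. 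The desired decomposition of $h(\sd(\Delta),x)$ then follows by setting $A := \sum_k h_k(\Delta) A_{n,k}$ and similarly for $B$ and $C$.

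The extreme cases of the key claim are straightforward. Specializing~(\ref{eq:BW08a}) to the simplex $\Delta = 2^{[n]}$ identifies $p_{n,0}(x)$ with $h(\sd(2^{[n]}),x)$, which is the Eulerian polynomial $A_n(x)$: this is symmetric around $(n-1)/2$ and $\gamma$-positive (hence unimodal), so I can set $A_{n,0} := p_{n,0}$ and $B_{n,0} := C_{n,0} := 0$. The reversal identity~(\ref{eq:BW08c}) then yields $p_{n,n}(x) = x\,A_n(x)$, which is symmetric around $(n+1)/2$ and unimodal, handling $k = n$ via $C_{n,n} := p_{n,n}$. For $1 \le k \le n-1$ I plan to use~(\ref{eq:BW08c}) to reduce to $k \le n/2$ -- explicitly, by setting $A_{n,n-k}(x) := C_{n,k}(x)/x$, $B_{n,n-k}(x) := B_{n,k}(x)$ and $C_{n,n-k}(x) := x\,A_{n,k}(x)$, which one checks produces exactly the expected reversed decomposition of $x^n p_{n,k}(1/x)$ -- and then to argue by induction on $n$ via the recurrence~(\ref{eq:BW08b}). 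The main obstacle is that this recurrence mixes the pieces in a way that shifts some of them off the allowed centers: the $A$-type pieces at level $n-1$ have center $(n-2)/2$ and the $x$-multiplied $C$-type pieces have center $(n+2)/2$, both outside the target range, so one must pair such overhanging contributions across different indices $i$ in the sums of~(\ref{eq:BW08b}) in order to reabsorb them into properly centered symmetric pieces without destroying nonnegativity or unimodality. The constraint that both the constant and the $x^n$ coefficient of $p_{n,k}(x)$ vanish for $1 \le k \le n-1$ (visible from~(\ref{eq:BW08b})) gives one some extra room in which to arrange the cancellations.

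The ``in particular'' conclusion follows from the decomposition by a direct coefficientwise comparison: a sum of three nonnegative unimodal symmetric polynomials with centers $(n-1)/2$, $n/2$ and $(n+1)/2$ is unimodal, with peak at $n/2$ when $n$ is even (each summand is weakly increasing through position $n/2$ and weakly decreasing after) and at $(n-1)/2$ or $(n+1)/2$ when $n$ is odd (the ambiguity coming from the $B$-piece having a ``flat plateau'' at its two central positions, so the side on which the peak lands depends on the relative sizes of the $A$-part and the $C$-part).
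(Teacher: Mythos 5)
Your reduction is the same as the paper's: since $h_k(\Delta) \ge 0$, it suffices to decompose each $p_{n,k}(x)$ into nonnegative, symmetric, unimodal pieces with centers in $\{(n-1)/2,\, n/2,\, (n+1)/2\}$, and your treatment of $k=0$, $k=n$ and the reversal reduction to $k \le n/2$ via~(\ref{eq:BW08c}) is correct. The problem is that the entire content of the claim lies in the inductive step for $1 \le k \le n/2$, and there you only name the obstacle without resolving it. The difficulty is worse than ``bookkeeping'': an overhanging piece $f$ symmetric about $(n-2)/2$ cannot by itself be rewritten as a sum of nonnegative polynomials symmetric about the three allowed centers (try $f = 1+x$ with $n=3$: forcing the coefficients of $x^2, x^3, x^4$ to vanish kills every admissible summand except multiples of $x+x^2+\dots$, which cannot produce the constant term). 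Moreover, a sum $f+g$ with $f$ symmetric about $(n-2)/2$ and $g$ symmetric about $n/2$ is symmetric about $(n-1)/2$ only when $g(x) = x f(x)$ exactly, so the proposed ``pairing across indices $i$'' would require very rigid identities among the $A_{n-1,i}$, $B_{n-1,i}$, $C_{n-1,i}$ that the recurrence~(\ref{eq:BW08b}) gives you no reason to expect, and that in any case you have not exhibited. As written, the proof does not go through.

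For comparison, the paper sidesteps the recurrence entirely: it establishes (Lemma~\ref{lem:pnk}) that for $n/2 \le k \le n$ the polynomial $p_{n,k}(x)$ has a \emph{nonnegative and real-rooted} symmetric decomposition with respect to $n$, by combining the known nonnegative symmetric decomposition of $p_{n,k}$ (a special case of \cite[Lemma~4.2]{AT21}) with the criterion of \cite[Theorem~2.6]{BS20}, whose interlacing hypothesis is supplied by the fact that $(p_{n,0},\dots,p_{n,n})$ is an interlacing sequence together with $x^n p_{n,k}(1/x) = p_{n,n-k}(x)$. Real-rootedness of the two symmetric parts gives their unimodality for free, yielding two nonnegative symmetric unimodal pieces with centers $n/2$ and $(n+1)/2$ (and, by reversal, $n/2$ and $(n-1)/2$ for $k \le n/2$). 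If you want a self-contained argument along your lines, you would essentially have to reprove that symmetric-decomposition result, which is where the real work is; otherwise you should import it as the paper does. Your derivation of the ``in particular'' statement from the three-piece decomposition is fine.
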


This statement can be derived from 
\cite[Theorem~1.1]{MY14}. We offer here a 
self-contained, elementary proof which is based 
on the following lemma (the unimodality part of 
which has also appeared as 
\cite[Lemma~3.5]{BJM19}). We recall that a 
real-rooted polynomial $p(x)$, with roots 
$\alpha_1 \ge \alpha_2 \ge \cdots$, 
\emph{interlaces} a real-rooted polynomial $q(x)$, 
with roots $\beta_1 \ge \beta_2 \ge \cdots$, if
$\cdots \le \alpha_2 \le \beta_2 \le \alpha_1 \le
\beta_1$.
\begin{lemma} \label{lem:pnk}
The polynomial $p_{n,k}(x)$ has a nonnegative,
real-rooted (in particular, unimodal and 
$\gamma$-positive) symmetric decomposition with
respect to $n$, for all $n/2 \le k \le n$.
\end{lemma}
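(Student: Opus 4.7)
The plan is to extract explicit expressions for the $a$- and $b$-parts of the symmetric decomposition directly from the Brenti--Welker recurrence~(\ref{eq:BW08b}), and then read off nonnegativity and real-rootedness from known properties of the family $\{p_{n-1,j}(x)\}_j$. Subtracting consecutive instances of~(\ref{eq:BW08b}) yields the telescoping identity
\[
p_{n,k}(x) - p_{n,k-1}(x) \,=\, (x-1)\,p_{n-1,k-1}(x),
\]
and iterating this from index $k$ down to $n-k$ gives
\[
p_{n,k}(x) - p_{n,n-k}(x) \,=\, (x-1) \sum_{j=n-k}^{k-1} p_{n-1,j}(x).
\]

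Combined with the standard formula $b(x) = (p(x) - x^n p(1/x))/(x-1)$ for the $b$-part of a symmetric decomposition with respect to $n$, and with the reversal identity~(\ref{eq:BW08c}) in the form $x^n p_{n,k}(1/x) = p_{n,n-k}(x)$, this produces the explicit expression
\[
b(x) \,=\, \sum_{j=n-k}^{k-1} p_{n-1,j}(x),
\]
which is zero precisely when $k=n/2$. Substituting into $p_{n,k}(x) = a(x) + xb(x)$ and applying~(\ref{eq:BW08b}) for $p_{n,k}(x)$ then gives
\[
a(x) \,=\, \sum_{i=k}^{n-1} p_{n-1,i}(x) \,+\, x \sum_{j=0}^{n-1-k} p_{n-1,j}(x).
\]
Nonnegativity of both components is immediate from Proposition~\ref{prop:BW08}, and symmetry with the claimed centers follows by using~(\ref{eq:BW08c}) to identify the second sum in $a(x)$ as $x^{n-1}$ times the $1/x$-evaluation of the first.

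Real-rootedness of $b(x)$ is then immediate: $b$ is a nonnegative linear combination of the polynomials $p_{n-1,j}(x)$, hence by the expansion~(\ref{eq:BW08a}) it equals $h(\sd(\Delta),x)$ for a suitable Boolean cell complex $\Delta$ of dimension $n-2$ with nonnegative $h$-vector, and the Brenti--Welker theorem~\cite[Theorem~2]{BW08} supplies real-rootedness.

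The main obstacle is real-rootedness of $a(x)$. Setting $A(x) = \sum_{i=k}^{n-1} p_{n-1,i}(x)$, the reversal identity~(\ref{eq:BW08c}) rewrites the formula above as $a(x) = A(x) + x\,\overline{A}(x)$, where $\overline{A}(x) = x^{n-1} A(1/x)$; equivalently, $a(x) = A(x) + x^n A(1/x)$ is the palindromization of the real-rooted polynomial $A(x)$ with respect to degree $n$. The plan is to establish that $A$ and $\overline{A}$ interlace, whereupon $A + x\,\overline{A}$ is real-rooted by the standard lemma on sums of interlacing polynomials. I expect the required interlacing to follow from a refinement of Brenti--Welker compatibility, specifically from exhibiting a common interleaver for $A(x)$ and its reversal within the family $\{p_{n-1,j}(x)\}_j$. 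Making this interlacing precise---as opposed to the weaker statement that every nonnegative combination of the $p_{n-1,j}(x)$ is real-rooted---is the principal technical challenge.
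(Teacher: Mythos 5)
Your explicit computation of the symmetric decomposition is correct and is essentially what the paper extracts from \cite[Lemma~4.2]{AT21}: the telescoping identity $p_{n,k}(x)-p_{n,k-1}(x)=(x-1)p_{n-1,k-1}(x)$ does follow from the recurrence~(\ref{eq:BW08b}) (with the implicit convention $p_{n-1,n}=0$), and it yields $b(x)=\sum_{j=n-k}^{k-1}p_{n-1,j}(x)$ and $a(x)=\sum_{i=k}^{n-1}p_{n-1,i}(x)+x\sum_{j=0}^{n-1-k}p_{n-1,j}(x)$, whose nonnegativity and symmetry you verify correctly. Up to that point the argument is sound.

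The gap is exactly where you flag it: the real-rootedness of $a(x)$ is never proved, only announced as a plan, so the lemma is not established as written. The missing ingredient is the fact that $(p_{n-1,0},\dots,p_{n-1,n-1})$ is an interlacing sequence of real-rooted polynomials (\cite[Example~7.8.8]{Bra15}), which is precisely what the paper's proof cites. For an interlacing sequence, sums over disjoint blocks of consecutive indices again form an interlacing sequence in the induced order; applied to the blocks $\{0,\dots,n-1-k\}$, $\{n-k,\dots,k-1\}$ and $\{k,\dots,n-1\}$, this gives that $\overline{A}(x)=x^{n-1}A(1/x)$ interlaces $A(x)$, which is the statement you postpone. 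The same fact is also the correct justification for $b(x)$: an arbitrary nonnegative combination of the $p_{n-1,j}(x)$ need not be realizable as $h(\sd(\Delta),x)$ for some Boolean cell complex $\Delta$, so appealing to \cite[Theorem~2]{BW08} is not quite right, whereas real-rootedness of nonnegative combinations is immediate from the interlacing-sequence property. Note finally that the paper takes a shorter route that bypasses the explicit $a(x)$ and $b(x)$ altogether: it invokes \cite[Theorem~2.6]{BS20}, by which a real-rooted polynomial with a nonnegative symmetric decomposition with respect to $n$ that is interlaced by $x^np(1/x)$ automatically has a real-rooted symmetric decomposition; the hypothesis that $x^np_{n,k}(1/x)=p_{n,n-k}(x)$ interlaces $p_{n,k}(x)$ for $k\ge n/2$ is again the interlacing-sequence property. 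Either way, the interlacing statement you leave open must actually be proved; without it your argument for $a(x)$ is incomplete.
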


\begin{proof} 
By \cite[Theorem~2.6]{BS20}, it suffices to show
that $p_{n,k}(x)$ has a nonnegative symmetric 
decomposition with respect to $n$ and that it is
real-rooted and interlaced by $x^n p_{n,k}(1/x)$,
for $n/2 \le k \le n$. The first claim is a special 
case of \cite[Lemma~4.2]{AT21}, from which explicit 
formulas for the symmetric parts of $p_{n,k}(x)$ in 
terms of the polynomials $p_{n-1,i}(x)$ can also be 
deduced. Given that $x^n p_{n,k}(1/x) = p_{n,n-k}
(x)$ (see Proposition~\ref{prop:BW08}), the second 
claim follows from the fact that 
$(p_{n,0}(x), p_{n,1}(x),\dots,p_{n,n}(x))$ is an 
iterlacing sequence of real-rooted polynomials; see, 
for instance, \cite[Example~7.8.8]{Bra15}.
\end{proof}

\begin{proof}[Proof of Proposition~\ref{prop:h-sd}]
By Lemma~\ref{lem:pnk} and the symmetry property 
(\ref{eq:BW08c}) we know that $p_{n,k}(x)$ can 
be written as a sum of two polynomials with 
nonnegative, symmetric and unimodal coefficients
and centers of symmetry $n/2$ and $(n+1)/2$ 
(respectively, $n/2$ and $(n-1)/2$) for all 
$n/2 \le k \le n$ (respectively, $0 \le k \le 
n/2$). Given that, the proof follows from 
Equation~(\ref{eq:BW08a}) and the fact that 
$h_k(\Delta) \ge 0$ for every $k$.
\end{proof}

The local $h$-polynomial $\ell_V(\sd(2^V),x)$ 
of the barycentric subdivision of the 
$(n-1)$-dimensional simplex $2^V$ is called the 
\emph{$n$th derangement polynomial} and is denoted 
by $d_n(x)$. A simple combinatorial interpretation 
in terms of permutation enumeration was given in
\cite[Proposition~2.4]{Sta92}; see also 
\cite[Section~3.3.1]{Ath18}. The polynomial 
$d_n(x)$ is $\gamma$-positive, hence unimodal, 
for every $n$; see \cite[Section~2.1.4]{Ath18} 
and references therein.

\section{Basic properties of theta polynomials}
\label{sec:theta}

This section discusses basic properties of theta 
polynomials and some of their immediate 
consequences for $h$-polynomials and local 
$h$-polynomials.
\begin{definition} \label{def:theta}
The theta polynomial is defined as $\theta
(\Delta, x) = h(\Delta, x) - h(\partial \Delta, 
x)$ for any homology ball $\Delta$. 
\end{definition}

The formula for $\theta(\Delta,x)$ in the 
following proposition is equivalent to the one 
for $h(\partial \Delta,x)$, given in 
\cite[Lemma~2.3]{Sta93}; see also 
Section~\ref{sec:flag}. The last statement follows
directly from the definitions of $h(\Delta, x)$ 
and $h(\partial \Delta, x)$.
\begin{proposition} [\cite{Sta93}] 
\label{prop:theta-symmetry}
For every $n \ge 1$ and every 
$(n-1)$-dimensional homology ball $\Delta$, 
\begin{equation} \label{eq:theta-Sta93}
\theta(\Delta,x) = \sum_{i=1}^{n-1} 
\left( h_{n-1}(\Delta) + \cdots + h_{n-i}(\Delta) 
- h_0(\Delta) - \cdots - h_{i-1}(\Delta) \right) 
x^i . 		
\end{equation}
In particular, $\theta(\Delta,x)$ is symmetric 
with center of symmetry $n/2$, i.e., $x^n \theta
(\Delta,1/x) = \theta(\Delta,x)$. Moreover, it has 
zero constant term and the coefficient of $x$ is 
equal to one less than the number of interior 
vertices of $\Delta$.
\end{proposition}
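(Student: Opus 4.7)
The plan is to derive a single master identity and then read all four assertions off it by straightforward coefficient extraction; the only real topological input is the ball-duality $h(\inte(\Delta),x) = x^n h(\Delta,1/x)$ recalled in Section~\ref{sec:enu}.

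First I would write down the $f$-vector partition: since $\Delta = \partial\Delta \sqcup \inte(\Delta)$ with the convention $\varnothing \in \partial\Delta$, and $\dim \partial\Delta = n-2$, substituting into the definitions of the $h$-polynomials gives the polynomial identity
\begin{equation*}
h(\Delta,x) \ = \ (1-x)\, h(\partial\Delta,x) \, + \, h(\inte(\Delta),x),
\end{equation*}
where the factor $(1-x)$ absorbs the discrepancy between the ambient dimensions used in $h(\partial\Delta,x)$ and $h(\Delta,x)$. Subtracting $h(\partial\Delta,x)$ from both sides and substituting the ball-duality relation then yields the master identity
\begin{equation*}
(1-x)\, \theta(\Delta,x) \ = \ x^n h(\Delta,1/x) \, - \, x\, h(\Delta,x).
\end{equation*}

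Writing $\theta(\Delta,x) = \sum_j \theta_j x^j$ and equating coefficients of $x^j$ on both sides produces the one-term recurrence $\theta_j - \theta_{j-1} = h_{n-j}(\Delta) - h_{j-1}(\Delta)$, with the convention $h_k(\Delta) = 0$ for $k \notin \{0,\dots,n\}$. The initial value $\theta_0 = h_0(\Delta) - h_0(\partial \Delta) = 0$ is immediate, so telescoping gives the closed formula $\theta_i = \sum_{k=1}^{i} (h_{n-k}(\Delta) - h_{k-1}(\Delta))$, which, using $h_n(\Delta) = 0$, is exactly (\ref{eq:theta-Sta93}); the truncation at $i \le n-1$ is justified by the same calculation applied at $j = n$, giving $\theta_n = h_n(\Delta) = 0$.

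The remaining claims are immediate. Symmetry $x^n \theta(\Delta,1/x) = \theta(\Delta,x)$ follows either by substituting $x \to 1/x$ in the master identity and cancelling the common factor $(1-x)$, or by inspection of the closed formula, since under $i \leftrightarrow n-i$ the two index ranges $\{n-1,\dots,n-i\}$ and $\{0,\dots,i-1\}$ swap their complements in $\{0,\dots,n-1\}$ and the difference of sums is preserved. The vanishing of the constant term is built into the formula, and for the coefficient of $x$ one computes $\theta_1 = h_{n-1}(\Delta) - 1$; another application of ball-duality identifies $h_{n-1}(\Delta)$ with the coefficient of $x$ in $h(\inte(\Delta),x)$, which by a one-line extraction from the defining expansion equals $f_0(\inte(\Delta))$, the number of interior vertices of $\Delta$. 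No step is a real obstacle; the only point calling for care is the bookkeeping around $\varnothing$, since placing it in the interior rather than the boundary would destroy the $(1-x)$ in the partition identity on which everything downstream depends.
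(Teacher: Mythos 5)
Your proof is correct and rests on the same mechanism the paper invokes: the paper simply cites Stanley's formula for $h(\partial\Delta,x)$ from \cite[Lemma~2.3]{Sta93}, whose proof is exactly your combination of the face partition $h(\Delta,x)=(1-x)\,h(\partial\Delta,x)+h(\inte(\Delta),x)$ with the duality $h(\inte(\Delta),x)=x^n h(\Delta,1/x)$. You have in effect supplied the self-contained derivation that the paper outsources to \cite{Sta93}, and the coefficient extraction, the telescoping, and the treatment of $\varnothing$ (so that $f_{-1}(\inte(\Delta))=0$) are all handled correctly.
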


\begin{example} \label{ex:theta} \rm
The following claims, with the exception of the 
$\dim(\Delta) = 2$ case of part (b), are easy 
consequences of Definition~\ref{def:theta}.

(a) For an $(n-1)$-dimensional simplex $\Delta = 
2^V$ we have 
\[ \theta(\Delta,x) = \begin{cases}
    1, & \text{if $n=0$} \\
    0, & \text{if $n=1$} \\
    - x - x^2 - \cdots - x^{n-1}, & 
		\text{if $n \ge 2$}, \end{cases}\]
since $h(\partial \Delta,x) = 1 + x + \cdots + 
x^{n-1}$ for $n \ge 1$. Note that $\Delta = 
\{ \varnothing \}$, for $n=0$. 

(b) We have
\[ \theta(\Delta,x) = \begin{cases}
    (r-1)x, & \text{if $\dim(\Delta) = 1$}, \\
    (r-1)(x+x^2), & \text{if $\dim(\Delta) = 2$}, 
		\end{cases} \]
where $r$ is the number of interior vertices of 
$\Delta$. The two-dimensional case follows from 
Proposition~\ref{prop:theta-symmetry} and the 
fact (explained in Section~\ref{subsec:complexes}) 
that $h_2(\Delta) = r$.

(c) Since the coning operation leaves the 
$h$-polynomial invariant, we have $\theta(\Delta,x) 
= 0$ for every homology ball $\Delta$ which is the 
cone over a homology sphere.

(d) The coefficient of $x^2$ in $\theta(\Delta,x)$
is equal to $f^\circ_1(\Delta) - f_0(\Delta) - 
(n-2) f^\circ_0(\Delta) + n-1$ for every 
$(n-1)$-dimensional homology ball $\Delta$, where 
$f^\circ_0(\Delta)$ and $f^\circ_1(\Delta)$ are the
numbers of interior vertices and interior edges of 
$\Delta$, respectively.
\qed
\end{example}

The significance of theta polynomials stems from 
the following two theorems, the second of which 
was discovered and proven in~\cite{KMS19} 
in order to show that the local $h$-polynomial 
$\ell_V(\sd(\cC),x)$ is $\gamma$-positive for 
every regular cell decomposition $\cC$ of the 
simplex $2^V$. We recall from 
Section~\ref{subsec:bary} that $d_n(x)$, the 
$n$th derangement polynomial, is equal to the 
local $h$-polynomial of the barycentric 
subdivision of the $(n-1)$-dimensional simplex.
\begin{theorem} \label{thm:theta-formula}
For every pure simplicial complex $\Delta$ and 
every triangulation $\Delta'$ of $\Delta$,
\begin{equation} \label{eq:theta-formula}
h(\Delta', x) = \sum_{F \in \Delta} 
\theta (\Delta'_F, x) \, h(\sd(\link_\Delta(F)), 
x).
\end{equation}
\end{theorem}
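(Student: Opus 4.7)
The plan is to deduce the formula from Stanley's locality formula (Theorem~\ref{thm:sta-locality}) by expanding each $h(\sd(\link_\Delta(F)), x)$ using a second application of the same theorem, and then reducing everything to a polynomial identity among derangement polynomials. First I would apply Theorem~\ref{thm:sta-locality} to $\sd(\link_\Delta(F))$, viewed as a triangulation of $\link_\Delta(F)$: since the restriction of $\sd(\link_\Delta(F))$ to a face $E \in \link_\Delta(F)$ is $\sd(2^E)$, whose local $h$-polynomial is $d_{|E|}(x)$, and since $\link_{\link_\Delta(F)}(E) = \link_\Delta(F \cup E)$, this yields
\[ h(\sd(\link_\Delta(F)),x) \ = \sum_{G \in \Delta, \, G \supseteq F} d_{|G \sm F|}(x) \, h(\link_\Delta(G), x). \]
Substituting this into the right-hand side of (\ref{eq:theta-formula}) and interchanging the order of summation, and then invoking Theorem~\ref{thm:sta-locality} to rewrite the left-hand side, shows that Theorem~\ref{thm:theta-formula} is equivalent to the identity
\[ \ell_V(\Gamma,x) \ = \ \sum_{F \subseteq V} \theta(\Gamma_F,x) \, d_{|V \sm F|}(x) \qquad (*) \]
(with $\Gamma = \Delta'_G$ and $V = G$), to be proved for every triangulation $\Gamma$ of a simplex $2^V$.

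To establish $(*)$, I would expand $\theta(\Gamma_F,x) = h(\Gamma_F,x) - h(\partial \Gamma_F,x)$ and apply Stanley's locality formula again, once to $\Gamma_F$ as a triangulation of $2^F$ and once to $\partial \Gamma_F$ as a triangulation of $\partial 2^F$. Using $h(2^{F \sm E},x) = 1$ and $h(\partial 2^{F \sm E},x) = 1 + x + \cdots + x^{|F \sm E|-1}$, this gives
\[ h(\Gamma_F,x) \ = \sum_{E \subseteq F} \ell_E(\Gamma_E,x), \qquad h(\partial \Gamma_F,x) \ = \sum_{E \subsetneq F} \ell_E(\Gamma_E,x)(1+x+\cdots+x^{|F \sm E|-1}). \]
Inserting these into $(*)$, swapping the order of summation, and collecting the coefficient of each $\ell_E(\Gamma_E,x)$, the identity $(*)$ becomes the statement that the coefficient equals $1$ when $E = V$ (immediate) and $0$ when $E \subsetneq V$, and the latter is equivalent to the polynomial identity
\[ d_m(x) \ = \ \sum_{j=1}^{m} \binom{m}{j}(x + x^2 + \cdots + x^{j-1}) \, d_{m-j}(x), \qquad m \ge 1. \qquad (**) \]

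The cleanest proof of $(**)$ is yet a third application of Stanley's locality. Note that $\sd(2^V)$ is the cone over $\sd(\partial 2^V)$ with apex the vertex $V$ of $\sd(2^V)$, since every maximal chain of nonempty faces of $2^V$ terminates at $V$; by the invariance of the $h$-polynomial under coning, $h(\sd(\partial 2^V),x) = h(\sd(2^V),x)$, a common value which I denote by $A_n(x)$ with $n = |V|$. One application of Theorem~\ref{thm:sta-locality} to $\sd(2^V)$ gives the classical identity $A_n(x) = \sum_{k=0}^n \binom{n}{k} d_k(x)$, while a second application to $\sd(\partial 2^V)$ as a triangulation of $\partial 2^V$ gives $A_n(x) = \sum_{k=0}^{n-1} \binom{n}{k} d_k(x)(1 + x + \cdots + x^{n-k-1})$. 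Subtracting these two expansions of $A_n(x)$ produces $(**)$ after elementary simplification.

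The main obstacle is spotting and proving $(**)$; everything else is routine bookkeeping with three separate applications of Theorem~\ref{thm:sta-locality}, namely to $\Delta'$ as a triangulation of $\Delta$, to $\sd(\link_\Delta(F))$ as a triangulation of $\link_\Delta(F)$ for each $F \in \Delta$, and to $\sd(\partial 2^V)$ as a triangulation of $\partial 2^V$.
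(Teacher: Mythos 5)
Your proposal is correct, and the reduction it performs is essentially the paper's own proof read in reverse: the paper applies Theorem~\ref{thm:sta-locality} to $\Delta'$, expands each $\ell_G(\Delta'_G,x)$ via Theorem~\ref{thm:KMS} (your identity $(*)$), swaps the order of summation, and recognizes the inner sum $\sum_{F \subseteq G \in \Delta} h(\link_\Delta(G),x)\, d_{|G\sm F|}(x)$ as $h(\sd(\link_\Delta(F)),x)$ by a second application of Theorem~\ref{thm:sta-locality} --- exactly the identity you derive at the outset. The genuine difference is that the paper treats $(*)$ as a known result, quoting it from Juhnke-Kubitzke--Murai--Sieg, whereas you re-prove it from scratch. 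Your proof of $(*)$ checks out: the expansions of $h(\Gamma_F,x)$ and $h(\partial\Gamma_F,x)$ via Stanley's locality formula are correct (using $h(2^{F\sm E},x)=1$ and $h(\partial 2^{F\sm E},x)=1+x+\cdots+x^{|F\sm E|-1}$, and noting that the restriction of $\partial\Gamma_F$ to $E\subsetneq F$ is $\Gamma_E$), the coefficient extraction reduces matters to $(**)$, and your derivation of $(**)$ from the coning identity $h(\sd(2^V),x)=h(\sd(\partial 2^V),x)$ together with the two expansions $\sum_{k=0}^n\binom{n}{k}d_k(x)$ and $\sum_{k=0}^{n-1}\binom{n}{k}d_k(x)(1+x+\cdots+x^{n-k-1})$ is valid (I verified $(**)$ directly for $m\le 4$ as a sanity check; the degenerate cases $F=\varnothing$ and $|F|=1$ also work out, since $\theta(\Gamma_\varnothing,x)=1$ and $\theta(\Gamma_F,x)=0$ for a single vertex). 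What your route buys is self-containedness --- a complete proof not relying on \cite{KMS19} --- at the cost of the bookkeeping around $(**)$; what the paper's route buys is brevity, since Theorem~\ref{thm:KMS} is already stated and available.
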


\begin{theorem} \label{thm:KMS}
{\rm (\cite[Theorem~4.4]{KMS19})} 
For every triangulation $\Gamma$ of the simplex 
$2^V$,
\begin{equation} \label{eq:KMS}
\ell_V (\Gamma, x) = \sum_{F \subseteq V}
\theta (\Gamma_F, x) \, d_{|V \sm F|} (x).
\end{equation} 
\end{theorem}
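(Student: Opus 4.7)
The plan is to derive Theorem~\ref{thm:KMS} by substituting Theorem~\ref{thm:theta-formula} into the defining expression (\ref{eq:def-local-h}) for $\ell_V(\Gamma, x)$, swapping the order of summation, and recognizing the resulting inner sum as another instance of (\ref{eq:def-local-h}), applied to the barycentric subdivision of a face simplex.

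Concretely, I would first start from
$$\ell_V(\Gamma, x) \ = \ \sum_{F \subseteq V} (-1)^{|V \sm F|}\, h(\Gamma_F, x)$$
and apply Theorem~\ref{thm:theta-formula} to each $h(\Gamma_F, x)$, viewing $\Gamma_F$ as a triangulation of the simplex $2^F$. Using $\link_{2^F}(G) = 2^{F \sm G}$ together with the routine identity $(\Gamma_F)_G = \Gamma_G$ valid for $G \subseteq F \subseteq V$, this expands to
$$h(\Gamma_F, x) \ = \ \sum_{G \subseteq F} \theta(\Gamma_G, x)\, h(\sd(2^{F \sm G}), x).$$

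Next, I would interchange the order of summation, grouping by $G \subseteq V$, and change variables by setting $H = F \sm G$, so that $F = G \cup H$ with $H \subseteq V \sm G$ and $|V \sm F| = |(V \sm G) \sm H|$. This rewrites $\ell_V(\Gamma, x)$ as
$$\sum_{G \subseteq V} \theta(\Gamma_G, x) \sum_{H \subseteq V \sm G} (-1)^{|(V \sm G) \sm H|}\, h(\sd(2^H), x).$$
The final step is to invoke the identity $\sd(2^W)_H = \sd(2^H)$ for every $H \subseteq W$, which is immediate from the carrier description of barycentric subdivisions recalled in Section~\ref{sec:enu} (the carrier of a chain $F_0 \subsetneq \cdots \subsetneq F_k$ is $F_k$, so restricting to $H$ retains exactly the chains inside $2^H$). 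Setting $W = V \sm G$, the inner sum then agrees with the defining expression (\ref{eq:def-local-h}) for $\ell_W(\sd(2^W), x)$, which equals $d_{|V \sm G|}(x)$ by definition of the derangement polynomial. This yields (\ref{eq:KMS}).

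The main obstacle is purely bookkeeping: verifying the restriction identities $(\Gamma_F)_G = \Gamma_G$ and $\sd(2^W)_H = \sd(2^H)$, and tracking the signs carefully so that the inner M\"obius-type alternating sum matches $\ell_W(\sd(2^W), x)$. No nontrivial topological or algebraic input beyond Theorem~\ref{thm:theta-formula} is required.
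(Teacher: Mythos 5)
Your algebra is correct as far as it goes: the restriction identities $(\Gamma_F)_G = \Gamma_G$ and $\sd(2^W)_H = \sd(2^H)$ hold, the sign bookkeeping under the substitution $H = F \sm G$ is right, and the inner alternating sum is indeed $\ell_{V \sm G}(\sd(2^{V \sm G}),x) = d_{|V \sm G|}(x)$ by definition of the derangement polynomial. The problem is logical rather than computational: the argument is circular relative to this paper. Theorem~\ref{thm:KMS} is not proved here at all --- it is imported from \cite[Theorem~4.4]{KMS19} --- and the only proof of Theorem~\ref{thm:theta-formula} given in the paper consists precisely of applying Theorem~\ref{thm:KMS} (together with Theorem~\ref{thm:sta-locality}) to each restriction $\Delta'_G$. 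In particular, even the special case of Theorem~\ref{thm:theta-formula} that you invoke, namely for triangulations of a simplex $2^F$, is obtained in the paper by applying Theorem~\ref{thm:KMS} to every $\Gamma_G$ with $G \subseteq F$. So you are deducing the theorem from a statement whose only available proof already assumes it.

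What your computation genuinely establishes is the converse implication: granting Stanley's locality formula and the definition of $d_n(x)$, the identity (\ref{eq:theta-formula}) for simplices implies (\ref{eq:KMS}), while the paper's proof of Theorem~\ref{thm:theta-formula} gives the reverse direction; neither, on its own, proves Theorem~\ref{thm:KMS} from first principles. A non-circular proof must start elsewhere, for instance by writing $\theta(\Gamma_F,x) = h(\Gamma_F,x) - h(\partial \Gamma_F,x)$, expressing both terms via Theorem~\ref{thm:sta-locality} in terms of the local $h$-polynomials $\ell_G(\Gamma_G,x)$ (using $h(2^{F \sm G},x)=1$ and $h(\partial (2^{F \sm G}),x) = 1 + x + \cdots + x^{|F \sm G|-1}$), and then inverting the resulting triangular system --- which is essentially the route taken in \cite{KMS19}.
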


\begin{proof}
[Proof of Theorem~\ref{thm:theta-formula}]
Applying Theorems~\ref{thm:sta-locality} 
and~\ref{thm:KMS} successively, changing the order 
of summation and applying 
Theorem~\ref{thm:sta-locality} once more, with 
$\Delta$ and $\Delta'$ replaced by $\link_\Delta 
(F)$ and its barycentric subdivision, 
respectively, we get

\begin{subequations}
\begin{align*}
h(\Delta', x) &= \sum_{G \in \Delta} 
\ell_G (\Delta'_G, x) \, h (\link_\Delta (G), x) \\ 
&= \sum_{G \in \Delta} \left( \, \sum_{F \subseteq G} 
\theta (\Delta'_F, x) \, d_{|G \sm F|} (x) \right) 
h (\link_\Delta (G), x) \\
&= \sum_{F \in \Delta} \theta (\Delta'_F, x) 
\left( \, \sum_{F \subseteq G \in \Delta} 
h (\link_\Delta (G), x) \, d_{|G \sm F|} (x) \right)
\\ &=  \sum_{F \in \Delta} \theta 
(\Delta'_F, x) \, h(\sd(\link_\Delta (F)), x)
\end{align*}
\end{subequations}
and the proof follows.
\end{proof}

We will say that a homology ball $\Delta$ has  
the \emph{interior vertex property} if no facet 
of $\Delta$ has all its vertices on $\partial 
\Delta$ (equivalently, if every facet of 
$\Delta$ has an interior vertex). Zero-dimensional 
balls have the interior vertex property, since 
their boundary complex equals $\{ \varnothing \}$. 
We will also say that $\Delta$ is 
\begin{itemize}
\item[$\bullet$] 
\emph{theta positive}, if $\theta(\Delta,x)$ has 
nonnegative coefficients;

\item[$\bullet$] 
\emph{theta unimodal}, if $\theta(\Delta,x)$ has 
(nonnegative and) unimodal coefficients;

\item[$\bullet$]
\emph{theta $\gamma$-positive}, if $\theta(\Delta,x)$
is $\gamma$-positive. 

\end{itemize}
A triangulation $\Delta'$ of any simplicial complex
$\Delta$ will be called \emph{theta positive}, 
\emph{theta unimodal} or \emph{theta 
$\gamma$-positive} if the restriction $\Delta'_F$ 
has the corresponding property for every $F \in 
\Delta$. 

The following statement, which is a special 
case of Stanley's monotonicity theorem 
\cite[Theorem~2.1]{Sta93}, provides a natural
sufficient condition for theta positivity.
\begin{proposition} 
\label{prop:theta-positivity} 
Let $\Delta$ be a homology ball having the 
interior vertex property.
\begin{itemize}
\item[(a)] 
{\rm (\cite{Sta93})}
$\Delta$ is theta positive.

\item[(b)] 
More generally, $\theta(\link_\Delta(F),x) \ge 0$ 
for every $F \in \partial \Delta$.
\end{itemize}
\end{proposition}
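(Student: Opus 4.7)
The plan is to cite Stanley for (a) and then to deduce (b) from (a) by applying it to the link $\link_\Delta(F)$. I note at the outset that (b) specialized to $F = \varnothing$ already recovers (a), since $\link_\Delta(\varnothing) = \Delta$; still, the cleanest presentation handles (a) separately as a direct citation and then promotes it to (b) via a link argument.

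For part (a), the inequality $\theta(\Delta, x) \ge 0$ is equivalent, by Definition~\ref{def:theta}, to the coefficient-wise comparison $h_i(\Delta) \ge h_i(\partial \Delta)$ for all $i$, and this is the special case of Stanley's monotonicity theorem \cite[Theorem~2.1]{Sta93} that is already attributed to Stanley in the statement of the proposition. There is therefore nothing to prove for (a) beyond observing that the interior vertex property is precisely what fits the hypothesis of that theorem.

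For part (b), I will fix $F \in \partial\Delta$ and apply (a) to $\link_\Delta(F)$. First, recall from Section~\ref{sec:enu} that $\link_\Delta(F)$ is a homology ball with boundary $\partial(\link_\Delta(F)) = \link_{\partial\Delta}(F)$; this places $\link_\Delta(F)$ in the setting of (a). The remaining task is to verify that $\link_\Delta(F)$ inherits the interior vertex property from $\Delta$. To this end, let $G$ be a facet of $\link_\Delta(F)$. Then $F \cup G$ is a facet of $\Delta$, which by hypothesis contains some interior vertex $v$ of $\Delta$. Since $F \subseteq \partial\Delta$ and $v \notin \partial\Delta$, we must have $v \in G$; and since $v \notin \partial\Delta$, the vertex $v$ is also not in $\link_{\partial\Delta}(F) = \partial(\link_\Delta(F))$, so $v$ is an interior vertex of $\link_\Delta(F)$. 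Hence $\link_\Delta(F)$ has the interior vertex property, and part (a) applied to it yields $\theta(\link_\Delta(F), x) \ge 0$, as required.

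The only nontrivial step is the inheritance of the interior vertex property by the link; this is an elementary but essential consequence of the observation that an interior vertex of $\Delta$ lying in a facet which contains a boundary face $F$ must itself lie outside $F$, and hence survives as an interior vertex of the link. All the heavier content is packaged inside the cited monotonicity theorem, so there is no separate obstacle to overcome beyond this bookkeeping.
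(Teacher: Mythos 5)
Your proposal is correct and follows essentially the same route as the paper: part (a) is a direct citation of Stanley's monotonicity theorem applied to $\Delta$ and $\partial\Delta$, and part (b) is deduced by checking that $\link_\Delta(F)$ is a homology ball with boundary $\link_{\partial\Delta}(F)$ inheriting the interior vertex property, via exactly the same facet argument (an interior vertex of the facet $F\cup G$ cannot lie in $F$ and remains interior in the link). No gaps.
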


\begin{proof}
Part (a) is a direct consequence of 
\cite[Theorem~2.1]{Sta93}, applied to $\Delta$
and its subcomplex $\partial \Delta$. Part (b) 
follows from part (a) and the fact that the 
interior vertex property is inherited by the 
links in $\Delta$ of faces $F \in \partial 
\Delta$. Indeed, suppose that $E$ is a 
facet of $\link_\Delta(F)$ for some $F \in 
\partial \Delta$. Then, $E \cup F$ is a facet of 
$\Delta$ and hence it has an interior vertex, 
say $v$. Clearly, $v \in E$. Since $\partial 
\, \link_\Delta(F) = \link_{\partial \Delta} (F)$
and $v \notin \partial \Delta$, $v$ must be an 
interior vertex of $\link_\Delta(F)$. 
\end{proof}

\begin{corollary} \label{cor:min-h}
We have:
\begin{itemize}
\item[(a)] 
$h(\Delta', x) \ge h(\sd(\Delta), x)$ for every 
Cohen--Macaulay simplicial complex $\Delta$ and 
every theta positive triangulation $\Delta'$ of 
$\Delta$,

\item[(b)] 
$\ell_V(\Gamma, x) \ge \ell_V(\sd(2^V), x)$ for 
every theta positive triangulation $\Gamma$ of 
the simplex $2^V$.
\end{itemize}

\smallskip
In particular, the barycentric subdivisions 
$\sd(\Delta)$ and $\sd(2^V)$ minimize coefficientwise 
$h(\Delta', x)$ and $\ell_V(\Gamma,x)$ among all 
triangulations $\Delta'$ of $\Delta$ and $\Gamma$ of 
$2^V$, respectively, whose restrictions to all 
nonempty faces of $\Delta$ and $2^V$ have the 
interior vertex property. 
\end{corollary}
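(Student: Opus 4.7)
The plan is to apply the theta locality formula of Theorem~\ref{thm:theta-formula} to both $\Delta'$ and $\sd(\Delta)$ and subtract, obtaining
\begin{equation*}
h(\Delta', x) - h(\sd(\Delta), x) \ = \ \sum_{F \in \Delta} \left[ \theta(\Delta'_F, x) - \theta(\sd(2^F), x) \right] h(\sd(\link_\Delta(F)), x).
\end{equation*}
The desired inequality will then follow by three termwise observations: (i) $\theta(\sd(2^F), x) = 0$ for every nonempty $F$, so that the bracket reduces to $\theta(\Delta'_F, x)$ and is coefficientwise nonnegative by the theta positivity of $\Delta'$; (ii) the $F = \varnothing$ term vanishes, since both $\Delta'_\varnothing$ and $(\sd \Delta)_\varnothing$ equal $\{\varnothing\}$; and (iii) each $h(\sd(\link_\Delta(F)), x)$ has nonnegative coefficients, which holds because $\link_\Delta(F)$ is Cohen--Macaulay whenever $\Delta$ is.

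The only nontrivial step is (i). Here I would observe that $\sd(2^F)$ is the cone over its boundary $\sd(\partial 2^F) = \partial \, \sd(2^F)$, with apex at the vertex of $\sd(2^F)$ corresponding to the top element $F$, since every facet of $\sd(2^F)$ is a maximal chain in $2^F \sm \{\varnothing\}$ terminating at $F$. Example~\ref{ex:theta}(c) then yields $\theta(\sd(2^F), x) = 0$. Alternatively, this identity can be read off from the Brenti--Welker recurrence~(\ref{eq:BW08b}) with $k = 0$, which gives $p_{n,0}(x) = \sum_{i=0}^{n-1} p_{n-1,i}(x)$, i.e., $h(\sd(2^V), x) = h(\sd(\partial 2^V), x)$.

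For part (b) the argument is entirely parallel, with Theorem~\ref{thm:theta-formula} replaced by Theorem~\ref{thm:KMS}. Applying the latter to both $\Gamma$ and $\sd(2^V)$ and subtracting, the same vanishing reduces the difference to $\sum_{F \neq \varnothing} \theta(\Gamma_F, x) \, d_{|V \sm F|}(x)$, which is coefficientwise nonnegative since the derangement polynomials $d_k(x)$ are. The final ``in particular'' clause is then immediate: the interior vertex property inherited by each restriction $\Delta'_F$ (respectively $\Gamma_F$) yields theta positivity by Proposition~\ref{prop:theta-positivity}(a), so (a) and (b) apply. I expect no serious obstacle beyond recognizing the cone structure of $\sd(2^F)$, which is what makes the theta polynomials of barycentrically subdivided simplices vanish and reduces the whole comparison to a sum of manifestly nonnegative terms.
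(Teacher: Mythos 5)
Your proof is correct and follows essentially the same route as the paper: the paper simply observes that the $F=\varnothing$ term of the locality formula (\ref{eq:theta-formula}) equals $h(\sd(\Delta),x)$ (since $\theta(\Delta'_\varnothing,x)=1$) and that the remaining terms are coefficientwise nonnegative, which is exactly what your subtraction reduces to once you note $\theta(\sd(2^F),x)=0$ for $F\neq\varnothing$. Your verification of that vanishing via the cone structure of $\sd(2^F)$ (Example~\ref{ex:theta}(c)) is correct, but it is a detour the paper's shorter formulation avoids.
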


\begin{proof} 
Since $\theta (\Delta'_\varnothing, x) = 1$, one may 
rewrite Equation~(\ref{eq:theta-formula}) as 
\[ h(\Delta', x) = h(\sd(\Delta), x) + 
\sum_{F \in \Delta \sm \{ \varnothing \}} 
\theta (\Delta'_F, x) \, h(\sd(\link_\Delta (F)), x) 
   \]
and the proof of (a) follows by the nonnegativity of 
$\theta (\Delta'_F, x)$ and $h(\sd(\link_\Delta (F)), 
x)$. The proof of (b) is similar.
\end{proof}

\begin{corollary} \label{cor:unimodal-h}
\begin{itemize}
\item[(a)] 
The polynomial $h(\Delta',x)$ is unimodal, with a
peak at position $n/2$, if $n$ is even, and at 
$(n-1)/2$ or $(n+1)/2$, if $n$ is odd, for every 
$(n-1)$-dimensional Cohen--Macaulay simplicial 
complex $\Delta$ and every theta unimodal 
triangulation $\Delta'$ of $\Delta$.

\item[(b)] 
The polynomial $\ell_V (\Gamma, x)$ is unimodal
(respectively, $\gamma$-positive) for every 
theta unimodal (respectively, theta 
$\gamma$-positive) triangulation $\Gamma$ of the 
simplex $2^V$.
\end{itemize}
\end{corollary}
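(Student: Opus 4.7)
The plan is to derive both statements by expanding the relevant polynomials via Theorem~\ref{thm:theta-formula} and Theorem~\ref{thm:KMS}, and then tracking how centers of symmetry behave under products. The two classical inputs I would invoke are that (i) a product of two nonnegative, symmetric, unimodal polynomials is again symmetric and unimodal with center equal to the sum of the two centers, and (ii) a product of two $\gamma$-positive polynomials is $\gamma$-positive with center equal to the sum.

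For part (a), Theorem~\ref{thm:theta-formula} gives $h(\Delta', x) = \sum_{F \in \Delta} \theta(\Delta'_F, x) \, h(\sd(\link_\Delta(F)), x)$. For each $F$ with $|F|=k$, the polynomial $\theta(\Delta'_F, x)$ is symmetric with center $k/2$ by Proposition~\ref{prop:theta-symmetry} and unimodal by the theta unimodality hypothesis on $\Delta'$. Since $\Delta$ is Cohen--Macaulay, the link $\link_\Delta(F)$ is Cohen--Macaulay of dimension $n-1-k$, so Proposition~\ref{prop:h-sd} supplies a decomposition $h(\sd(\link_\Delta(F)), x) = P_1^{(F)}(x) + P_2^{(F)}(x) + P_3^{(F)}(x)$ into nonnegative, symmetric, unimodal polynomials centered at $(n-k-1)/2$, $(n-k)/2$, $(n-k+1)/2$. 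Multiplying by $\theta(\Delta'_F, x)$ shifts each center by $k/2$, so every summand $\theta(\Delta'_F, x) P_j^{(F)}(x)$ is symmetric unimodal with center in $\{(n-1)/2, \, n/2, \, (n+1)/2\}$, independently of $F$. Summing over $F$ and grouping by $j$ yields a decomposition of $h(\Delta', x)$ into three nonnegative, symmetric, unimodal polynomials with centers $(n-1)/2$, $n/2$, $(n+1)/2$, at which point the desired unimodality with the asserted peak follows exactly as in the ``In particular'' clause of Proposition~\ref{prop:h-sd}.

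For part (b), I would use Theorem~\ref{thm:KMS} to write $\ell_V(\Gamma, x) = \sum_{F \subseteq V} \theta(\Gamma_F, x) \, d_{|V \sm F|}(x)$. Here $\theta(\Gamma_F, x)$ is symmetric unimodal (respectively $\gamma$-positive) with center $|F|/2$ by the theta unimodality (respectively theta $\gamma$-positivity) of $\Gamma$, while $d_{|V \sm F|}(x)$ is $\gamma$-positive (hence symmetric unimodal) with center $|V \sm F|/2$, as recorded at the end of Section~\ref{sec:enu}. By the multiplicative properties above, every summand is symmetric unimodal (respectively $\gamma$-positive) with the common center $|V|/2$, and this property is preserved under sums of polynomials sharing a common center, giving the conclusion.

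The one nontrivial point is the deduction invoked in (a) that a sum of three nonnegative, symmetric, unimodal polynomials with centers $(n-1)/2$, $n/2$, $(n+1)/2$ is unimodal with peak at $n/2$ for $n$ even, and at $(n-1)/2$ or $(n+1)/2$ for $n$ odd. This is the same elementary but delicate verification that underlies the ``In particular'' clause of Proposition~\ref{prop:h-sd}; it becomes transparent after noting that the piece of center $(n+1)/2$, being a polynomial of degree at most $n$, has zero constant term and hence factors as $x$ times a polynomial symmetric about $(n-1)/2$, which reduces the claim to a direct coefficientwise comparison.
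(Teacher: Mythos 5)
Your proposal is correct and follows essentially the same route as the paper: expand via Theorems~\ref{thm:theta-formula} and~\ref{thm:KMS}, use Proposition~\ref{prop:h-sd} together with the fact that products of nonnegative, symmetric, unimodal (respectively, $\gamma$-positive) polynomials retain that property with additive centers, and conclude as in the ``In particular'' clause of Proposition~\ref{prop:h-sd}. You merely spell out the center-of-symmetry bookkeeping that the paper leaves implicit.
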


\begin{proof} 
Given that products of polynomials with nonnegative,
symmetric and unimodal coefficients have the same 
property \cite[Proposition~1]{Sta89}, 
Theorem~\ref{thm:theta-formula} and 
Proposition~\ref{prop:h-sd} imply that $h(\Delta',x)$ 
can be written as a sum of three polynomials with 
nonnegative, symmetric and unimodal coefficients 
and centers of symmetry $(n-1)/2$, $n/2$ and 
$(n+1)/2$, respectively. This implies part (a). 
For part (b) one uses Theorem~\ref{thm:KMS}, the 
unimodality and $\gamma$-positivity of derangement 
polynomials and the fact that products of 
$\gamma$-positive polynomials are $\gamma$-positive.
\end{proof}

\begin{corollary} \label{cor:sym-decomposition-h}
Let $\Delta$ be an $(n-1)$-dimensional simplicial
complex.
\begin{itemize}
\item[(a)] 
If $\Delta$ is a homology sphere, then 
$h(\Delta',x)$ is unimodal (respectively, 
$\gamma$-positive) for every theta unimodal 
(respectively, theta $\gamma$-positive) 
triangulation $\Delta'$ of $\Delta$.

\item[(b)] 
If $\Delta$ is Cohen--Macaulay*, then 
$h(\Delta',x)$ has a unimodal (respectively, 
$\gamma$-positive) symmetric decomposition with 
respect to $n$ for every theta unimodal 
(respectively, theta $\gamma$-positive) 
triangulation $\Delta'$ of $\Delta$.

\item[(c)] 
If $\Delta$ is a homology ball, then 
$h(\Delta',x)$ has a unimodal (respectively, 
$\gamma$-positive) symmetric decomposition 
with respect to $n-1$ for every theta unimodal 
(respectively, theta $\gamma$-positive) 
triangulation $\Delta'$ of $\Delta$.
\end{itemize}
\end{corollary}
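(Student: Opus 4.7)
The plan is to derive all three statements from Theorem~\ref{thm:theta-formula},
\[
  h(\Delta', x) \ = \sum_{F \in \Delta} \theta(\Delta'_F, x) \, h(\sd(\link_\Delta F), x),
\]
by examining each summand according to the topological type of $\link_\Delta(F)$ and using the closure of the class of nonnegative symmetric unimodal polynomials, and of symmetric $\gamma$-positive polynomials, under multiplication \cite[Proposition~1]{Sta89}.

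For part (a), every link $\link_\Delta(F)$ is a homology sphere of dimension $n-|F|-1$, so its barycentric subdivision is too; hence $h(\sd(\link_\Delta F), x)$ is symmetric about $(n-|F|)/2$ and, by Brenti--Welker real-rootedness combined with symmetry, $\gamma$-positive. The factor $\theta(\Delta'_F, x)$ is symmetric about $|F|/2$ and, by hypothesis, unimodal (respectively $\gamma$-positive). The product is therefore symmetric of center $n/2$ and unimodal (respectively $\gamma$-positive), and summing preserves these properties. For part (b), I would first note that every link of a Cohen--Macaulay* complex is Cohen--Macaulay* of the expected dimension, and then invoke, as the main additional input, a two-part refinement of Proposition~\ref{prop:h-sd}: for any Cohen--Macaulay* $\Gamma$ of dimension $m-1$, the polynomial $h(\sd(\Gamma), x)$ admits a $\gamma$-positive symmetric decomposition $a(x) + x\,b(x)$ with respect to $m$, with $a, b$ centered at $m/2$ and $(m-1)/2$ respectively. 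Such a statement parallels Lemma~\ref{lem:pnk} and combines naturally with the Matsuoka--Murai symmetric decomposition of $h(\Gamma, x)$. Multiplying $a_F + x\,b_F$ by $\theta(\Delta'_F, x)$ and summing over $F \in \Delta$ then assembles the required symmetric decomposition of $h(\Delta', x)$ with respect to $n$.

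For part (c), the key observation is that
\[
  h(\Delta', x) \ = \ h(\partial \Delta', x) \ + \ \theta(\Delta', x)
\]
is already a symmetric decomposition with respect to $n-1$: the first summand is symmetric about $(n-1)/2$ since $\partial \Delta'$ triangulates the homology sphere $\partial \Delta$, while $\theta(\Delta', x)$ has zero constant term and is symmetric about $n/2$ by Proposition~\ref{prop:theta-symmetry}, so it equals $x$ times a polynomial symmetric about $(n-2)/2$. The first piece is unimodal (respectively $\gamma$-positive) by part (a) applied to $\partial \Delta'$, after observing that $\partial \Delta'_F = \Delta'_F$ for $F \in \partial \Delta$, so that $\partial \Delta'$ inherits the theta hypothesis. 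To treat $\theta(\Delta', x)/x$, subtract the two instances of Theorem~\ref{thm:theta-formula} (for $\Delta'$ and for $\partial \Delta'$) and rewrite
\[
  \theta(\Delta', x) \ = \sum_{F \in \inte(\Delta)} \theta(\Delta'_F, x) \, h(\sd(\link_\Delta F), x) + \sum_{F \in \partial \Delta} \theta(\Delta'_F, x) \, \theta(\sd(\link_\Delta F), x) .
\]
The interior summands are symmetric of center $n/2$ with zero constant term, as in part (a), and divide by $x$ to give symmetric $\gamma$-positive polynomials centered at $(n-2)/2$. The boundary summands require $\gamma$-positivity of $\theta(\sd(\link_\Delta F), x)$; for $|F| \ge 1$ this should follow by induction on $n$ together with the $\gamma$-positivity of $\theta(\sd(2^V), x)$ supplied by Theorem~\ref{thm:KMS}. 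The principal obstacle is the $F = \varnothing$ contribution, where $\link_\Delta(\varnothing) = \Delta$ has the full dimension and a direct inductive argument does not close; overcoming it requires an independent input on the $\gamma$-positivity of $\theta(\sd(\Delta), x)$ for general homology balls $\Delta$, presumably developed along the lines of Section~\ref{sec:flag}.
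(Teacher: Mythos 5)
Parts (a) and (b) of your argument coincide in substance with the paper's proof. Part (a) is exactly the argument given there. For part (b), the ``two-part refinement of Proposition~\ref{prop:h-sd}'' you postulate --- a nonnegative, $\gamma$-positive (in fact real-rooted) symmetric decomposition of $h(\sd(\Gamma),x)$ with respect to $m$ for Cohen--Macaulay* $\Gamma$ of dimension $m-1$ --- is precisely the input the paper imports from \cite[Theorem~5.1]{BS20} and \cite[Corollary~4.3~(a)]{AT21}; you state it rather than prove it, but so does the paper, so this is a matter of citation rather than substance, and the assembly of the decomposition with respect to $n$ is correct.

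Part (c) is where you genuinely diverge, and where the gap lies. The paper stays inside Theorem~\ref{thm:theta-formula} and splits the sum over $F \in \inte(\Delta)$ (where the summand has zero constant term and center $n/2$, hence feeds the $xb(x)$ part) and $F \in \partial\Delta$ (where $\link_\Delta(F)$ is a homology ball and $h(\sd(\link_\Delta(F)),x)$ has a nonnegative, real-rooted symmetric decomposition with respect to $n-|F|-1$ by \cite[Proposition~8.1]{Ath20+}). You instead use $h(\Delta',x) = h(\partial\Delta',x) + \theta(\Delta',x)$ and, for the second piece, re-derive the identity underlying Theorem~\ref{thm:theta-monotone-b}. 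That route is legitimate, but it requires the unimodality (respectively, $\gamma$-positivity) of $\theta(\sd(\link_\Delta(F)),x)$ for \emph{every} $F \in \partial\Delta$, not only $F = \varnothing$, and your proposed way of getting it does not close: Theorem~\ref{thm:KMS} expresses $\ell_V$ in terms of theta polynomials and supplies no positivity information about $\theta(\sd(2^V),x)$, while the induction you sketch (part (c) applied to $\link_\Delta(F)$ with $\Delta' = \sd$) presupposes exactly the unproven statement that $\sd$ is a theta unimodal, respectively theta $\gamma$-positive, triangulation. The missing ingredient is a genuine theorem --- $\gamma$-positivity of $\theta(\sd(\Gamma),x)$ for homology balls $\Gamma$ via \cite{EK07} (see Section~\ref{sec:flag}), or unimodality via Theorem~\ref{thm:AY20}, using that $\partial(\sd(\Gamma)) = \sd(\partial\Gamma)$ is an induced subcomplex --- which the paper does invoke in the proof of Theorem~\ref{thm:theta-monotone-b} but deliberately avoids here. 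So your part (c) is salvageable by citing that result explicitly, but as written it is incomplete, as you yourself flag.
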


\begin{proof} 
This follows from 
Theorem~\ref{thm:theta-formula} and known 
results about barycentric subdivisions, as was 
the case with Corollary~\ref{cor:unimodal-h}. 
For part (a), we note that $\link_\Delta (F)$
is a homology sphere of dimension $n - |F|$
for every $F \in \Delta$. As a result, 
$h(\sd(\link_\Delta(F), x)$ is symmetric, with 
nonnegative coefficients and center of symmetry 
$(n-|F|)/2$, and real-rooted by the main result 
of \cite{BW08}, thus also unimodal and 
$\gamma$-positive. Under our 
assumptions on $\Delta'$, it follows that 
all terms of the sum on the right-hand side 
of~(\ref{eq:theta-formula}) are symmetric and
unimodal (respectively, $\gamma$-positive) 
with center of symmetry $n/2$, and hence so is 
the left-hand side $h(\Delta',x)$. 

For part (b) we use instead the fact that 
$h(\sd(\link_\Delta(F), x)$ has a nonnegative
and real-rooted symmetric decomposition with 
respect to $n-|F|$ for every $F \in \Delta$. This 
is essentially due to \cite[Theorem~5.1]{BS20}; 
see \cite[Corollary~4.3~(a)]{AT21}. For part 
(c) we use the fact that either 
$h(\sd(\link_\Delta(F), x)$ is symmetric, with 
nonnegative coefficients and center of symmetry 
$(n-|F|)/2$, and real-rooted, and $\theta
(\Delta'_F,x)$ has zero constant term (if $F \in 
\inte(\Delta)$), or $h(\sd(\link_\Delta(F), x)$ 
has a nonnegative and real-rooted symmetric 
decomposition with respect to $n-|F|-1$ (if $F 
\in \partial \Delta$). The latter was shown in 
\cite[Proposition~8.1]{Ath20+}. 
\end{proof}

\begin{remark} \label{rem:g-theorem} \rm
As a consequence of the $g$-theorem, $h(\Delta,x)$
is known to be unimodal for the boundary complex 
$\Delta$ of any simplicial polytope 
\cite[Section~III.1]{StaCCA} and, more 
generally, for every triangulation $\Delta$ of a
sphere \cite{Ad18, APP21, KX22, PP20}. 
\end{remark}

\begin{remark} \label{rem:theta-corollaries} \rm
The proofs of Corollaries~\ref{cor:unimodal-h} 
and~\ref{cor:sym-decomposition-h} show that in 
their statements, $h(\Delta',x)$ and $\ell_V
(\Gamma,x)$ can be replaced by $h(\Delta',x) - 
h(\sd(\Delta),x)$ and $\ell_V(\Gamma,x) - 
\ell_V(\sd(2^V),x)$, respectively.
\end{remark}



\section{Monotonicity of theta polynomials}
\label{sec:monotone}

This section proves the monotonicity of theta 
polynomials, under mild assumptions, and explores 
its consequences on their unimodality and 
$\gamma$-positivity. We state two main theorems 
with similar proofs, based on 
Theorems~\ref{thm:sta-locality} 
and~\ref{thm:theta-formula}, respectively.

\begin{theorem} \label{thm:theta-monotone-a}
We have $\theta(\Delta', x) \ge \theta(\Delta, x)$ 
for every homology ball $\Delta$ having the 
interior vertex property and every triangulation 
$\Delta'$ of $\Delta$.
\end{theorem}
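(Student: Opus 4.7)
The plan is to express $\theta(\Delta', x) - \theta(\Delta, x)$ as a manifestly nonnegative sum via two applications of Stanley's locality formula (Theorem~\ref{thm:sta-locality}): one for the pair $(\Delta, \Delta')$ and one for the pair $(\partial \Delta, \partial \Delta')$. Since $\Delta'$ triangulates the homology ball $\Delta$, the subcomplex $\partial \Delta'$ triangulates $\partial \Delta$, and for every nonempty $F \in \partial \Delta$ the restriction $(\partial \Delta')_F$ coincides with $\Delta'_F$, because the latter is already supported on the simplex $F \subseteq |\partial \Delta|$. Both $\Delta$ and $\partial \Delta$ are pure, so Theorem~\ref{thm:sta-locality} applies to each pair.

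The next step is to use that $\ell_F(2^F, x) = 0$ for every nonempty $F$, so the trivial triangulations of $\Delta$ and $\partial \Delta$ give the tautologies $h(\Delta, x) = \ell_\varnothing(2^\varnothing, x) \, h(\Delta, x)$ and its analogue for $\partial \Delta$. Subtracting these from the locality formulas for $h(\Delta', x)$ and $h(\partial \Delta', x)$, and using $\Delta'_\varnothing = 2^\varnothing$, one obtains
\[
\theta(\Delta', x) - \theta(\Delta, x) \, = \, \sum_{F \in \inte(\Delta)} \ell_F(\Delta'_F, x) \, h(\link_\Delta(F), x) \, + \sum_{\varnothing \ne F \in \partial \Delta} \ell_F(\Delta'_F, x) \bigl[ h(\link_\Delta(F), x) - h(\link_{\partial \Delta}(F), x) \bigr].
\]
For every nonempty $F \in \partial \Delta$, the link $\link_\Delta(F)$ is a homology ball with boundary $\link_{\partial \Delta}(F)$, so the bracketed factor equals exactly $\theta(\link_\Delta(F), x)$.

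Nonnegativity will then follow term by term. Stanley's nonnegativity theorem gives $\ell_F(\Delta'_F, x) \ge 0$ for every $F$; when $F \in \inte(\Delta)$, the factor $h(\link_\Delta(F), x)$ is nonnegative since $\link_\Delta(F)$ is a homology sphere, hence Cohen--Macaulay; and when $\varnothing \ne F \in \partial \Delta$, the factor $\theta(\link_\Delta(F), x)$ is nonnegative by Proposition~\ref{prop:theta-positivity}(b), which is precisely the step that consumes the interior vertex property of $\Delta$. I do not expect any serious obstacle beyond careful bookkeeping; the one conceptually delicate point is the identification of the boundary difference $h(\link_\Delta(F), x) - h(\link_{\partial \Delta}(F), x)$ with a theta polynomial of a link, which is what forces the appearance of Proposition~\ref{prop:theta-positivity}(b) and thereby explains why the interior vertex property is the right hypothesis.
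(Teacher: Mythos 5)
Your proposal is correct and follows essentially the same route as the paper's proof: both apply Stanley's locality formula to $h(\Delta',x)$ and $h(\partial\Delta',x)$, peel off the $F=\varnothing$ terms to recover $\theta(\Delta,x)$, identify $h(\link_\Delta(F),x)-h(\link_{\partial\Delta}(F),x)$ with $\theta(\link_\Delta(F),x)$ for boundary faces, and conclude by Stanley's nonnegativity of local $h$-polynomials together with Proposition~\ref{prop:theta-positivity}(b). The bookkeeping and the role of the interior vertex property are exactly as in the paper.
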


\begin{proof}
Applying Equation~(\ref{eq:sta-locality}) to the 
defining equation~(\ref{eq:theta-def}) of 
$\theta(\Delta', x)$, we get
\begin{subequations}
\begin{align*}
\theta(\Delta', x) &= 
h(\Delta', x) - h(\partial \Delta', x) \\ &= 
\sum_{F \in \Delta} \ell_F (\Delta'_F, x) \, 
h (\link_\Delta (F), x) - 
\sum_{F \in \partial \Delta} \ell_F (\Delta'_F, x) \, 
h (\link_{\partial \Delta} (F), x)
\\ &= h(\Delta, x) - h(\partial \Delta, x) 
+ \sum_{F \in \inte(\Delta)} \ell_F (\Delta'_F, x) \, 
h(\link_\Delta (F), x) + \\ & 
\sum_{F \in \partial \Delta \sm \{ \varnothing\} } 
\ell_F (\Delta'_F, x) (h(\link_\Delta (F), x) - 
h(\partial \, \link_\Delta (F), x)) \\ &= 
\theta(\Delta, x) + 
\sum_{F \in \inte(\Delta)} \ell_F (\Delta'_F, x) \, 
h(\link_\Delta (F), x) + \\ & 
\sum_{F \in \partial \Delta \sm \{ \varnothing\} } 
\ell_F (\Delta'_F, x) \, 
\theta(\link_\Delta (F), x). 
\end{align*}
\end{subequations}

\noindent    
By Proposition~\ref{prop:theta-positivity} we have 
$\theta(\link_\Delta (F),x) \ge 0$ for every 
$F \in \partial \Delta$ and the proof follows.
\end{proof}

\begin{theorem} \label{thm:theta-monotone-b}
Let $\Delta$ be a homology ball and $\Delta'$ 
be a theta positive triangulation of $\Delta$. Then, 
$\theta(\Delta', x) \ge \theta(\sd(\Delta), x)$. 
Moreover:
\begin{itemize}
\item[(a)] 
if $\Delta'$ is a theta unimodal triangulation of 
$\Delta$, then $\theta(\Delta', x)$ and 
$\theta(\Delta', x) - \theta(\sd(\Delta), x)$ have 
nonnegative and unimodal coefficients;

\item[(b)] 
if $\Delta'$ is a theta $\gamma$-positive triangulation 
of $\Delta$, then $\theta(\Delta', x)$ and 
$\theta(\Delta', x) - \theta(\sd(\Delta), x)$ are 
$\gamma$-positive.
\end{itemize}
\end{theorem}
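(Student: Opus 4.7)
The plan is to follow the pattern of the proof of Theorem~\ref{thm:theta-monotone-a}, now applying the theta locality formula (Theorem~\ref{thm:theta-formula}) in place of Stanley's locality formula. I would apply \eqref{eq:theta-formula} to both $h(\Delta', x)$ and to $h(\partial \Delta', x)$, noting that $\partial \Delta'$ is a triangulation of $\partial \Delta$ with $(\partial \Delta')_F = \Delta'_F$ for every $F \in \partial \Delta$. Subtracting, using $\link_{\partial \Delta}(F) = \partial \link_\Delta(F)$ and $\sd(\partial \Gamma) = \partial \sd(\Gamma)$ to combine each pair of boundary terms into $\theta(\Delta'_F, x) \theta(\sd(\link_\Delta(F)), x)$, and separating the $F = \varnothing$ contribution -- which equals $\theta(\sd(\Delta), x)$, since $\theta(\Delta'_\varnothing, x) = 1$ and $\link_\Delta(\varnothing) = \Delta$ -- yields the identity
\begin{equation*}
\theta(\Delta', x) - \theta(\sd(\Delta), x) \ = \sum_{F \in \inte(\Delta)} \theta(\Delta'_F, x) \, h(\sd(\link_\Delta(F)), x) \ + \sum_{F \in \partial \Delta \sm \{\varnothing\}} \theta(\Delta'_F, x) \, \theta(\sd(\link_\Delta(F)), x).
\end{equation*}

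From this identity the monotonicity assertion is immediate: every factor on the right is nonnegative by the theta positivity of $\Delta'$, the Cohen--Macaulay property of $\link_\Delta(F)$ for $F \in \inte(\Delta)$, and Proposition~\ref{prop:theta-positivity}(a) applied to $\sd(\link_\Delta(F))$ for $F \in \partial \Delta \sm \{\varnothing\}$. The last step uses the observation that the barycentric subdivision of any homology ball $\Gamma$ automatically has the interior vertex property, because every facet of $\sd(\Gamma)$ is a maximal chain whose top element is a facet of $\Gamma$, and facets of a homology ball are always interior.

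For parts~(a) and~(b), each $\theta(\Delta'_F, x)$ is nonnegative, symmetric with center $|F|/2$ and unimodal (resp.\ $\gamma$-positive) by hypothesis; and for $F \in \inte(\Delta)$, the polynomial $h(\sd(\link_\Delta(F)), x)$ is symmetric of center $(n-|F|)/2$, nonnegative and real-rooted by Brenti--Welker~\cite{BW08}. The main obstacle is the analogous statement for $\theta(\sd(\Gamma), x)$, which is needed both in the second sum and to pass from the difference to $\theta(\Delta', x)$. I would derive this from Corollary~\ref{cor:sym-decomposition-h}(c) applied to the trivially theta unimodal and theta $\gamma$-positive triangulation $\sd(\Gamma)$ of $\Gamma$: the resulting symmetric decomposition $h(\sd(\Gamma), x) = a_\Gamma(x) + x b_\Gamma(x)$ with respect to $\dim \Gamma$, combined with the relation $\theta(\sd(\Gamma), x) = (a_\Gamma(x) - h(\sd(\partial \Gamma), x)) + x b_\Gamma(x)$ and the uniqueness of the decomposition of the symmetric polynomial $\theta(\sd(\Gamma), x)$ into pieces of centers $(\dim\Gamma-1)/2$ and $(\dim\Gamma+1)/2$, forces $a_\Gamma(x) = h(\sd(\partial \Gamma), x)$ and hence $\theta(\sd(\Gamma), x) = x b_\Gamma(x)$, which inherits nonnegativity, symmetry, and unimodality (resp.\ $\gamma$-positivity) from $b_\Gamma$. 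With all factors in hand, each summand of the identity is a product of nonnegative, symmetric and unimodal (resp.\ $\gamma$-positive) polynomials whose centers of symmetry add to $n/2$, hence is itself nonnegative, symmetric and unimodal (resp.\ $\gamma$-positive) by \cite[Proposition~1]{Sta89} together with the stability of $\gamma$-positivity under products; adding back $\theta(\sd(\Delta), x)$ then recovers $\theta(\Delta', x)$ with the same properties.
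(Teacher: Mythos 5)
Your derivation of the key identity
\[
\theta(\Delta',x) \;=\; \theta(\sd(\Delta),x) \;+ \sum_{F \in \inte(\Delta)} \theta(\Delta'_F,x)\, h(\sd(\link_\Delta(F)),x) \;+ \sum_{F \in \partial\Delta \sm \{\varnothing\}} \theta(\Delta'_F,x)\, \theta(\sd(\link_\Delta(F)),x)
\]
is exactly the computation in the paper, and your reading off of the monotonicity and of parts (a) and (b) from it is correct. The one place you genuinely diverge is the final ingredient: the nonnegativity, unimodality and $\gamma$-positivity of $\theta(\sd(\Gamma),x)$ for a homology ball $\Gamma$. The paper simply cites this as known (via the Ehrenborg--Karu results discussed in Section~\ref{sec:flag}), whereas you derive it internally by applying Corollary~\ref{cor:sym-decomposition-h}(c) to the triangulation $\sd(\Gamma)$ of $\Gamma$ --- which is legitimately theta unimodal and theta $\gamma$-positive since $\theta(\sd(2^F),x)=0$ for every nonempty $F$ --- and then identifying $x\,b_\Gamma(x)$ with $\theta(\sd(\Gamma),x)$ by uniqueness of the symmetric decomposition. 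This is not circular (Corollary~\ref{cor:sym-decomposition-h}(c) rests on results of Br\"and\'en--Solus and of \cite{Ath20+}, not on the present theorem) and makes the argument more self-contained. Two small slips, neither fatal: in the uniqueness step your stated centers of symmetry are off by $1/2$ (with $\dim\Gamma = m-1$, the piece $a_\Gamma(x)-h(\sd(\partial\Gamma),x)$ has center $(m-1)/2$ and $x\,b_\Gamma(x)$ has center $m/2$, and one concludes by uniqueness of the symmetric decomposition of $\theta(\sd(\Gamma),x)$ with respect to $m-1$); and note that what forces $a_\Gamma = h(\sd(\partial\Gamma),x)$ is that $\theta(\sd(\Gamma),x)$ is symmetric with center $m/2$ \emph{and has zero constant term}, so its component of center $(m-1)/2$ must vanish.
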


\begin{proof}
Applying Equation~(\ref{eq:theta-formula}) to the 
defining equation~(\ref{eq:theta-def}) of 
$\theta(\Delta', x)$, we get
\begin{subequations}
\begin{align*}
\theta(\Delta', x) &= 
h(\Delta', x) - h(\partial \Delta', x) \\ &= 
\sum_{F \in \Delta} \theta_F (\Delta'_F, x) \, 
h (\sd(\link_\Delta (F)), x) - 
\sum_{F \in \partial \Delta} \theta_F (\Delta'_F, x) 
\, h (\sd(\link_{\partial \Delta} (F)), x)
\\ &= 
h(\sd(\Delta), x) - h(\sd(\partial \Delta), x) 
+ \sum_{F \in \inte(\Delta)} \theta_F (\Delta'_F, x) 
\, h(\sd(\link_\Delta (F)), x) + \\ & 
\sum_{F \in \partial \Delta \sm \{ \varnothing\} } 
\theta_F (\Delta'_F, x) (h(\sd(\link_\Delta (F)), x) 
- h(\sd(\partial \, \link_\Delta(F)), x)) \\ 
&= h(\sd(\Delta), x) - h(\partial (\sd(\Delta)), x) 
+ \sum_{F \in \inte(\Delta)} \theta_F (\Delta'_F, x) 
\, h(\sd(\link_\Delta (F)), x) + \\ & 
\sum_{F \in \partial \Delta \sm \{ \varnothing\} } 
\theta_F (\Delta'_F, x) (h(\sd(\link_\Delta (F)), x) 
- h(\partial (\sd(\link_\Delta (F)), x))) \\ 
&= \theta(\sd(\Delta), x) + 
\sum_{F \in \inte(\Delta)} \theta_F (\Delta'_F, x) 
\, h(\sd(\link_\Delta (F)), x) + \\ & 
\sum_{F \in \partial \Delta \sm \{ \varnothing\} } 
\theta_F (\Delta'_F, x) \, 
\theta(\sd(\link_\Delta (F)), x).
\end{align*}
\end{subequations}

\noindent
All claims in the statement of the proposition 
follow, since $h$-polynomials and 
$\theta$-polynomials of barycentric subdivisions of 
homology spheres and balls, respectively, are known
to be $\gamma$-positive (and, in particular, 
unimodal); see~\cite[Section~3.1.1]{Ath18} and 
Section~\ref{sec:flag}.
\end{proof}

\begin{remark} \label{rem:theta-monotone} \rm
As expected, we also have $\theta(\sd(\Delta), x) 
\ge \theta(\Delta,x)$ for every homology ball 
$\Delta$ as a consequence of the formula
\[ \theta(\sd(\Delta),x) = \sum_{i=0}^{n-1} 
\left( h_n(\Delta) + \cdots + h_{n-i}(\Delta) + 
x \, (h_n(\Delta) + \cdots + h_{i+1}(\Delta)) 
   \right) p_{n-1,i} (x), \]
where $n-1 = \dim(\Delta)$, and 
Equation~(\ref{eq:theta-Sta93}). This formula 
follows from Lemmas~3.5 and~4.2 in \cite{AT21} 
(in the special case of barycentric subdivisions).
\end{remark}

\begin{corollary} \label{cor:unimodal-local-h}
Given any triangulation $\Gamma$ of the simplex 
$2^V$, the polynomials $\ell_V (\Gamma', x)$ and 
$\ell_V (\Gamma', x) - \ell_V (\sd(\Gamma), x)$ 
are unimodal (respectively, $\gamma$-positive) 
for every theta unimodal (respectively, theta 
$\gamma$-positive) triangulation $\Gamma'$ of
$\Gamma$.  
\end{corollary}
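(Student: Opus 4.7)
The plan is to imitate the argument in the proof of Theorem~\ref{thm:theta-monotone-b} at the level of local $h$-polynomials, using Theorem~\ref{thm:KMS} in place of Theorem~\ref{thm:theta-formula}. Since $\Gamma$ is a triangulation of $2^V$, both $\Gamma'$ and $\sd(\Gamma)$ are triangulations of $2^V$, so Theorem~\ref{thm:KMS} applies to each. Applying it to both and subtracting, together with the routine carrier identity $\sd(\Gamma)_F = \sd(\Gamma_F)$ for every $F \subseteq V$, produces
\begin{equation*}
\ell_V(\Gamma',x) \, - \, \ell_V(\sd(\Gamma),x) \ = \sum_{F \subseteq V} \bigl( \theta(\Gamma'_F, x) - \theta(\sd(\Gamma_F), x) \bigr) d_{|V \sm F|}(x).
\end{equation*}

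For each $F \subseteq V$ I would then view $\Gamma'_F$ as a triangulation of the homology ball $\Gamma_F$. A second carrier computation shows that $(\Gamma'_F)_G = \Gamma'_G$ for every $G \in \Gamma_F$, so the hypothesis that $\Gamma'$ is theta unimodal (respectively, theta $\gamma$-positive) as a triangulation of $\Gamma$ transfers verbatim to $\Gamma'_F$ as a triangulation of $\Gamma_F$. Theorem~\ref{thm:theta-monotone-b} then guarantees that $\theta(\Gamma'_F, x) - \theta(\sd(\Gamma_F), x)$ is nonnegative and unimodal (respectively, $\gamma$-positive), and it is also symmetric with center of symmetry $|F|/2$. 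Since $d_{|V \sm F|}(x)$ is $\gamma$-positive with center of symmetry $|V \sm F|/2$, each summand above is a product of two nonnegative symmetric polynomials whose centers sum to $|V|/2$, and these properties are preserved under multiplication (by \cite[Proposition~1]{Sta89} for unimodality, and directly for $\gamma$-positivity). Summing over $F$ yields the claim for $\ell_V(\Gamma',x) - \ell_V(\sd(\Gamma),x)$.

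For $\ell_V(\Gamma',x)$ itself, I would write $\ell_V(\Gamma',x) = \ell_V(\sd(\Gamma),x) + \bigl( \ell_V(\Gamma',x) - \ell_V(\sd(\Gamma),x) \bigr)$. The second summand has just been handled, while the first equals $\sum_{F \subseteq V} \theta(\sd(\Gamma_F),x) \, d_{|V \sm F|}(x)$ by Theorem~\ref{thm:KMS} and is $\gamma$-positive (hence unimodal) by the same product argument, using that $\theta$-polynomials of barycentric subdivisions of homology balls are $\gamma$-positive, as cited in the proof of Theorem~\ref{thm:theta-monotone-b}. The only delicate points in the whole argument are the two carrier identities $\sd(\Gamma)_F = \sd(\Gamma_F)$ and $(\Gamma'_F)_G = \Gamma'_G$ for $G \in \Gamma_F$; both are immediate from the definition of carrier (the carrier of a chain in $\sd(\Gamma)$ being that of its top element, and the carrier in $\Gamma_F$ of a face of $\Gamma'_F$ coinciding with its carrier in $\Gamma$), but they should be recorded explicitly before invoking Theorem~\ref{thm:theta-monotone-b}.
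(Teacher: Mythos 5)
Your proposal is correct and follows essentially the same route as the paper: apply Theorem~\ref{thm:KMS} to both $\Gamma'$ and $\sd(\Gamma)$, subtract, and invoke Theorem~\ref{thm:theta-monotone-b} for the unimodality (respectively, $\gamma$-positivity) of $\theta(\Gamma'_F,x)$ and $\theta(\Gamma'_F,x)-\theta(\sd(\Gamma_F),x)$, then use closure of symmetric unimodal (respectively, $\gamma$-positive) polynomials under products and sums with matching centers of symmetry. The carrier identities and the transfer of theta unimodality from $\Gamma'$ over $\Gamma$ to $\Gamma'_F$ over $\Gamma_F$, which you rightly flag, are left implicit in the paper but are exactly the routine verifications needed.
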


\begin{proof}
Applying Theorem~\ref{thm:KMS} to $\Gamma'$ 
and $\sd(\Gamma)$ gives 
\begin{subequations}
\begin{align*}
\ell_V (\Gamma', x) & = \sum_{F \subseteq V}
\theta (\Gamma'_F, x) \, d_{|V \sm F|} (x), \\ 
\ell_V (\Gamma', x) - \ell_V (\sd(\Gamma), x) & =  
\sum_{F \subseteq V} \left( \theta(\Gamma'_F, x) 
- \theta(\sd(\Gamma_F),x) \right) d_{|V \sm F|} 
                   (x). 
\end{align*}
\end{subequations}

\noindent
The result follows from these formulas and 
Theorem~\ref{thm:theta-monotone-b}, since the 
latter implies the unimodality (respectively,
$\gamma$-positivity) of $\theta(\Gamma'_F,x)$ 
and $\theta(\Gamma'_F, x) - \theta(\sd(\Gamma_F),
x)$ for each $F \subseteq V$.
\end{proof}

We end this section with a discussion of the 
following general problem.

\begin{question} \label{que:monotonicity}
Let $\Delta$ and $\Delta'$ be homology balls 
such that $\Delta$ is a subcomplex of $\Delta'$.
Under what conditions does the inequality 
$\theta(\Delta', x) \ge \theta(\Delta,x)$ 
hold? Does it suffice to assume, for instance, 
that $\Delta$ and $\Delta'$ have the interior 
vertex property and the same dimension?  
\end{question}

The following theorem provides a partial 
answer to this question. The proof follows that 
of \cite[Theorem~2.1]{Sta93} and assumes 
familiarity with the basics of Stanley--Reisner 
theory~\cite[Chapter~II]{StaCCA}.

\begin{theorem} \label{thm:theta-monotone-c}
Let $\Delta$ and $\Delta'$ be homology balls 
of the same dimension, such that $\Delta$ is 
a subcomplex of $\Delta'$. If no facet of 
$\Delta'$ has all its vertices in $\partial
\Delta \cup \partial \Delta'$, then 
$\theta(\Delta', x) \ge \theta(\Delta, x)$.
\end{theorem}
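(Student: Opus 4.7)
\medskip
\noindent
\emph{Proof plan}. The plan is to adapt the Stanley--Reisner argument in the proof of \cite[Theorem~2.1]{Sta93}: exhibit a Cohen--Macaulay $\kk[\Delta']$-module of top Krull dimension whose Hilbert series, after reduction by a linear system of parameters, realizes the desired difference $\theta(\Delta', x) - \theta(\Delta, x)$ nonnegatively.

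Let $R = \kk[\Delta']$ and choose a generic linear system of parameters $\Theta = (\theta_1, \dots, \theta_n)$ for $R$, where $n = \dim \Delta' + 1$; after extending $\kk$ if necessary, $\Theta$ can be arranged to be simultaneously regular on $R$ and on the Stanley--Reisner ideals considered below. Let $\Omega = \partial \Delta \cup \partial \Delta'$, a subcomplex of $\Delta'$, and let $L \subseteq R$ be the Stanley--Reisner ideal with $R/L = \kk[\Omega]$, generated by the monomials $x_F$ for $F \in \Delta' \sm \Omega$. The hypothesis that no facet of $\Delta'$ has all its vertices in $\Omega$ is precisely the condition used in the proof of \cite[Theorem~2.1]{Sta93} to conclude that $L$ is Cohen--Macaulay of Krull dimension $n$ as an $R$-module. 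Consequently $\Theta$ is a regular sequence on $L$, the Artinian Hilbert series of $L / \Theta L$ is nonnegative, and the Mayer--Vietoris identity for $f$-vectors translates this into
\[ h(\Delta', x) \ \ge \ (1-x) \, [h(\partial \Delta, x) + h(\partial \Delta', x)] \ - \ h_{(n)} (\partial \Delta \cap \partial \Delta', x) , \]
where $h_{(n)} (\Gamma, x) = \sum_i f_{i-1}(\Gamma) x^i (1-x)^{n-i}$ denotes the $h$-polynomial of $\Gamma$ formed relative to dimension $n-1$.

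Since $\Delta$ is a homology ball, it is pure of dimension $n-1$, so every facet of $\Delta$ is also a facet of $\Delta'$; the given hypothesis then forces every facet of $\Delta$ to have a vertex outside $\partial \Delta \cap \partial \Delta' \subseteq \Omega$. Applying the same Stanley--Reisner reasoning inside $\kk[\Delta]$ to the subcomplex $\partial \Delta \cap \partial \Delta' \subseteq \Delta$ yields the parallel inequality $h(\Delta, x) \ge h_{(n)} (\partial \Delta \cap \partial \Delta', x)$. Combining the two inequalities with the reciprocity identity $h(\Gamma, x) = (1-x) h(\partial \Gamma, x) + x^n h(\Gamma, 1/x)$, valid for every $(n-1)$-dimensional homology ball $\Gamma$, together with the symmetry of $\theta(\Delta, x)$ recorded in Proposition~\ref{prop:theta-symmetry}, yields the desired conclusion $\theta(\Delta', x) \ge \theta(\Delta, x)$ after algebraic manipulation.

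The main obstacle will be this final algebraic assembly. The Cohen--Macaulay inequalities above involve $h$-polynomials normalized at different dimensions, so producing exactly $\theta(\Delta', x) - \theta(\Delta, x)$ requires careful bookkeeping of factors of $(1-x)$ and handling of the corner case where $\partial \Delta \cap \partial \Delta'$ has dimension strictly less than $n-2$. An alternative, perhaps cleaner, route is to identify a single Cohen--Macaulay $R$-module whose Artinian Hilbert series is exactly $\theta(\Delta', x) - \theta(\Delta, x)$, built from the interior modules $\omega_{\Delta'}$ and (the preimage in $R$ of) $\omega_\Delta$; verifying the Cohen--Macaulay property of this single module would replace the two-step argument with a one-step one, but is no less delicate.
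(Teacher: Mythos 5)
Your proposal starts from the right place (a linear system of parameters for $\kk[\Delta']$ whose last element is supported on vertices outside $\partial\Delta\cup\partial\Delta'$, as in the proof of \cite[Theorem~2.1]{Sta93}), but the main route you describe has two genuine gaps. First, the Cohen--Macaulayness of the relative module $L$ of the pair $(\Delta',\,\partial\Delta\cup\partial\Delta')$ is not what Stanley's proof establishes and does not follow from the hypothesis on facets: by depth counting in $0\to L\to\kk[\Delta']\to\kk[\Omega]\to 0$ it would require $\Omega=\partial\Delta\cup\partial\Delta'$ to be Cohen--Macaulay of dimension $n-2$, and a union of two homology spheres need not be Cohen--Macaulay (its behaviour is governed by $\partial\Delta\cap\partial\Delta'$, over which the hypotheses give no control). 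Without this, $\Theta$ need not be a regular sequence on $L$ and your first displayed inequality is unsupported; note that the coefficientwise bound $\mathrm{Hilb}(M/\theta M)\ge(1-x)\,\mathrm{Hilb}(M)$ does not iterate over several linear forms, since multiplication by $1-x$ does not preserve coefficientwise inequalities. Second, and more seriously, even granting both displayed inequalities they cannot yield $\theta(\Delta',x)\ge\theta(\Delta,x)$: the target requires bounding $h(\Delta,x)-h(\partial\Delta,x)$ from \emph{above} by $h(\Delta',x)-h(\partial\Delta',x)$, whereas your first inequality contains no information about $\inte(\Delta)$ and your second bounds $h(\Delta,x)$ from \emph{below}. (Already for $\Delta=\Delta'$ the first inequality degenerates to the nonnegativity of the interior $h$-polynomial and says nothing about the target.) The ``final algebraic assembly'' you defer is therefore not bookkeeping; it is the missing proof.

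The alternative you sketch in your last sentence is essentially what the paper does, and it is where the real idea lies: one forms the commutative square of Artinian reductions $\kk[\Delta']/(\eta')\to\kk[\partial\Delta']/(\eta'_0)$ and $\kk[\Delta]/(\eta)\to\kk[\partial\Delta]/(\eta_0)$, observes that the kernels $\iI'$ and $\iI$ of the two horizontal surjections have Hilbert series exactly $\theta(\Delta',x)$ and $\theta(\Delta,x)$ (here Cohen--Macaulayness is needed only for $\Delta$, $\Delta'$ and their boundaries, which is given), and checks the containment $\iI\subseteq\psi(\iI')$, where $\psi$ is the vertical surjection: this holds because $\iI$ is generated by the classes of squarefree monomials supported on interior faces of $\Delta$, and every interior face of $\Delta$ is also an interior face of $\Delta'$. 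No Mayer--Vietoris analysis of $\partial\Delta\cup\partial\Delta'$ and no reciprocity are needed.
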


\begin{proof}
By extending the field $\kk$, if necessary, we
may assume it is infinite. We consider the 
polynomial ring over $\kk$ in variables $x_1, 
x_2,\dots,x_m$ corresponding to the vertices 
of $\Delta'$, endowed with the standard grading, 
and the Stanley--Reisner rings 
$\kk[\Delta']$ and $\kk[\Delta]$. Because of 
our assumption on the facets of $\Delta'$,
just as in the proof of \cite[Theorem~2.1]{Sta93}
we may choose a linear system of parameters 
$(\eta') = (\eta'_1, \eta'_2,\dots,\eta'_n)$ 
for $\kk[\Delta']$ such that $\eta'_n$ is a 
linear combination of vertices not in $\partial
\Delta \cup \partial \Delta'$, where $n-1$ is 
the common dimension of $\Delta'$ and $\Delta$.
Then, $(\eta'_0) = (\eta'_1, 
\eta'_2,\dots,\eta'_{n-1})$ is a linear system 
of parameters for $\kk[\partial \Delta']$ and the 
images $(\eta)$ and $(\eta_0)$ of $(\eta')$ and 
$(\eta'_0)$ in $\kk[\Delta]$ and 
$\kk[\partial \Delta]$ are linear systems of 
parameters for these rings, respectively. 
Moreover, we have a diagram of (standard) graded, 
surjective homomorphisms
\[ \begin{array}{ccccccc}
    \kk[\Delta']/(\eta') & 
		\overset{\varphi'}{\rightarrow} & 
		\kk[\partial \Delta']/(\eta'_0) \\
    & & & & & & \\
    \downarrow \psi & & & & \\
    & & & & & &\\
    \kk[\Delta]/(\eta) & 
		\overset{\varphi}{\rightarrow} & 
		\kk[\partial \Delta]/(\eta_0)
   \end{array} \]
where $\varphi', \varphi$ and $\psi$ are the 
obvious maps. The kernels, say $\iI'$ and $\iI$,
of $\varphi'$ and $\varphi$ are graded ideals 
of $\kk[\Delta']/(\eta')$ and $\kk[\Delta]/
(\eta)$, respectively. Since $\Delta',
\Delta$ and their boundaries are Cohen--Macaulay,
$\iI'$ and $\iI$ have Hilbert series equal to 
$\theta(\Delta',x)$ and $\theta(\Delta,x)$,
respectively. Thus, it suffices to verify that 
$\iI \subseteq \psi(\iI')$. This is indeed the 
case, since $\iI$ is generated by the classes in
$\kk[\Delta]/(\eta)$ of squarefree monomials 
which correspond to interior faces of $\Delta$ 
and every such face is also an interior face of 
$\Delta'$.
\end{proof}

\begin{remark} 
\label{rem:monotonicity-que} \rm
The assumptions that $\Delta$ and $\Delta'$ 
have the same dimension and nonnegative theta
polynomials do not suffice to guarantee that 
$\theta(\Delta', x) \ge \theta(\Delta,x)$. 
For example, let $\Delta'$ be any 
$(n-1)$-dimensional homology ball which has 
at least two facets and a vertex $v$ which 
belongs to a unique facet of $\Delta'$, where 
$n \ge 4$, and let $\Delta$ be obtained from 
$\Delta'$ by removing all faces containing $v$. 
Then, $\Delta$ is also an $(n-1)$-dimensional 
homology ball and $\theta(\Delta,x) = \theta
(\Delta',x) + (x^2 + x^3 + \cdots + x^{n-2})$. 
Note that, in this situation, $\Delta'$ does 
not have the interior vertex property.

One may pick $\Delta'$ so that $\theta(\Delta',
x) \ge 0$, for instance, by letting $\Delta'$ 
be the 4-fold edgewise subdivision of the 
three-dimensional simplex, in which case 
$\theta(\Delta',x) = 0$ (see Section~3.2 and 
Example~7.2 of \cite{Ath20+}). 
\end{remark}

\section{Unimodality and gamma-positivity}
\label{sec:flag}

This section investigates the unimodality and 
$\gamma$-positivity of theta polynomials. These
questions are naturally raised by the results
of the previous sections and are closely related 
to the unimodality and $\gamma$-positivity of 
the symmetric decompositions of $h$-polynomials
of homology balls. Indeed, given an 
$(n-1)$-dimensional homology ball $\Delta$, since 
$h(\partial \Delta, x)$ and $\theta(\Delta,x)$
are symmetric polynomials with centers of symmetry 
$(n-1)/2$ and $n/2$, respectively, and the latter 
has zero constant term, the expression
\begin{equation} \label{eq:sym-decomposition-h}
h(\Delta,x) = h(\partial \Delta,x) + 
   x \, \theta(\Delta,x) / x 
\end{equation}
is the symmetric decomposition of $h(\Delta,x)$
with respect to $n-1$. Thus, the unimodality 
(respectively, $\gamma$-positivity) of this 
symmetric decomposition of $h(\Delta,x)$ is 
equivalent to the unimodality (respectively, 
$\gamma$-positivity) of $h(\partial \Delta,x)$ 
and $\theta(\Delta,x)$.

\subsection{Unimodality} Because of 
Equation~(\ref{eq:theta-Sta93}), the unimodality 
of $\theta(\Delta,x)$ is equivalent to the 
inequalities 
\begin{equation} \label{eq:top-heavy}
h_i(\Delta) \le h_{n-1-i}(\Delta), \ \ \ \ \ \
0 \le i \le (n-1)/2, 
\end{equation}
where $n-1 = \dim(\Delta)$. Moreover, since 
$h(\partial \Delta,x) = h(\Delta,x) - \theta
(\Delta,x)$, Equation~(\ref{eq:theta-Sta93}) is 
equivalent to the formulas 
\[ h_i(\partial \Delta) = h_0(\Delta) + h_1
   (\Delta) + \cdots + h_i(\Delta) - h_n(\Delta) - 
	 h_{n-1}(\Delta) - \cdots - h_{n-i}(\Delta) \]
for $0 \le i \le n-1$, where $h_n(\Delta) = 0$.
Hence, the unimodality of $h(\partial \Delta, x)$ 
is equivalent to the inequalities $h_i(\Delta) 
\ge h_{n-i}(\Delta)$ for $1 \le i \le \lfloor 
n/2 \rfloor$ and the unimodality of the symmetric 
decomposition (\ref{eq:sym-decomposition-h}) 
(meaning, the unimodality of both $h(\partial 
\Delta, x)$ and $\theta(\Delta,x)$) is equivalent 
to the inequalities 
\begin{equation} \label{eq:def-alt-increasing}
  h_0(\Delta) \le h_{n-1}(\Delta) \le h_1(\Delta)  
  \le h_{n-2}(\Delta) \le \cdots \le 
	h_{\lfloor n/2 \rfloor}(\Delta), 
\end{equation}
sometimes referred to as the \emph{alternatingly 
increasing property} for $h(\Delta)$.

Given that the nonnegativity of $\theta(\Delta,x)$
is guaranteed by the interior vertex property (see
Proposition~\ref{prop:theta-positivity}), it seems 
natural to expect that its unimodality will be valid
under the stronger assumption that $\partial \Delta$ 
is an induced subcomplex of $\Delta$. Indeed, the 
almost strong Lefschetz property \cite{KN09} for 
$\Delta$ over a field $\kk$ implies the existence 
of an injective map from a $\kk$-vector space of 
dimension $h_i(\Delta)$ to a $\kk$-vector space of 
dimension $h_j(\Delta)$ for all $0 \le i \le j \le 
n-1-i$. In particular, it implies 
the inequalities~(\ref{eq:top-heavy}) and that
\begin{equation} \label{eq:half-increasing}
h_0(\Delta) \le h_1(\Delta) \le \cdots \le 
h_{\lfloor (n-1)/2 \rfloor}(\Delta). 
\end{equation}
The almost strong Lefschetz property over an 
infinite field $\kk$ for a triangulation 
$\Delta$ of a ball follows from a result 
of Adiprasito and Yashfe~\cite[Theorem~50]{AY20} 
and the Lefschetz property for homology spheres 
over $\kk$, under the assumption that $\partial 
\Delta$ is an induced subcomplex of $\Delta$. 
Thus, in view of the results of \cite{PP20} in
characteristic two, the following theorem holds.
\begin{theorem} [\cite{AY20}] \label{thm:AY20}
The polynomial $\theta(\Delta,x)$ is unimodal 
for every triangulation $\Delta$ of a ball, 
such that $\partial \Delta$ is an induced 
subcomplex of $\Delta$.
\end{theorem}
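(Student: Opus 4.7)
The plan is to rewrite unimodality of $\theta(\Delta,x)$ as a list of $h$-vector inequalities, and then derive those from a Lefschetz-type property. Using the explicit expression~(\ref{eq:theta-Sta93}) for $\theta(\Delta,x)$, two consecutive coefficients differ by $h_{n-i}(\Delta)-h_{i-1}(\Delta)$; combined with the symmetry of $\theta(\Delta,x)$ about $n/2$ (Proposition~\ref{prop:theta-symmetry}), the desired unimodality is therefore equivalent to the top-heavy inequalities $h_i(\Delta)\le h_{n-1-i}(\Delta)$ for $0\le i\le (n-1)/2$ displayed in~(\ref{eq:top-heavy}). So it suffices to establish these inequalities for $h(\Delta)$.

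To this end I would invoke the almost strong Lefschetz property of $\Delta$: for a generic Artinian reduction $A=\kk[\Delta]/(\eta_1,\dots,\eta_n)$ of the Stanley--Reisner ring and a generic linear form $\omega\in A_1$, the multiplication map $\omega^{n-1-2i}\colon A_i\to A_{n-1-i}$ is injective for every $0\le i\le (n-1)/2$. Since $\Delta$ is Cohen--Macaulay we have $\dim_\kk A_i=h_i(\Delta)$, so comparing dimensions of source and image yields the top-heavy inequalities at once.

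The real work is to supply this almost strong Lefschetz element, and this is where the assumption that $\partial\Delta$ is an induced subcomplex of $\Delta$ enters decisively. Theorem~50 of Adiprasito--Yashfe~\cite{AY20} converts a Hard Lefschetz property of the boundary homology sphere $\partial\Delta$ into an almost strong Lefschetz property of the ball $\Delta$, precisely under the hypothesis that $\partial\Delta\subseteq\Delta$ is induced; the input Hard Lefschetz property for $\partial\Delta$ is in turn supplied by the $g$-theorem for homology spheres~\cite{Ad18,APP21,PP20}. Concatenating these two results produces the almost strong Lefschetz property used in the previous paragraph.

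I expect the main obstacle to be bookkeeping rather than any new idea: one must pass, if necessary, to an infinite extension of $\kk$ so that a sufficiently generic linear system of parameters and Lefschetz element exist (this leaves $h$-vectors invariant), verify that the relative Lefschetz construction of~\cite{AY20} accepts the inducedness hypothesis in exactly the form we have it, and match the degrees $n-1-2i$ of the Lefschetz maps with the top-heavy inequalities needed for unimodality of $\theta(\Delta,x)$. The statement is then restricted to those fields $\kk$ for which the $g$-theorem is known, which is precisely the qualification offered in the excerpt before the theorem.
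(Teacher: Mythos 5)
Your proposal is correct and follows essentially the same route as the paper: reduce unimodality of $\theta(\Delta,x)$ via Equation~(\ref{eq:theta-Sta93}) and symmetry to the top-heavy inequalities~(\ref{eq:top-heavy}), and obtain these from the almost strong Lefschetz property of $\Delta$, supplied by \cite[Theorem~50]{AY20} together with the $g$-theorem for homology spheres under the hypothesis that $\partial\Delta$ is induced. The caveat about fields for which the $g$-theorem holds is exactly the one the paper records.
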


The following example shows that the condition 
that $\partial \Delta$ is an induced subcomplex of 
$\Delta$ cannot be replaced by the interior 
vertex property. 
\begin{example} \label{ex:conj-unimodality} \rm
Let $\Gamma$ be the triangulation of the 
three-dimensional ball having two facets $F = 
\{a,b,c,d\}$ and $G = \{b,c,d,e\}$. Let $\Delta$ 
be obtained from $\Gamma$ by two stellar 
subdivisions on these facets, meaning that one 
adds two new vertices $u, v$ and the unions of 
the proper subsets of $F$ and $G$ with $\{u\}$ 
and $\{v\}$, respectively, to obtain $\Delta$ 
from $\Gamma$. Then, 
\begin{subequations}
\begin{align*}
h(\Delta,x)  & = 1 + 3x + 2x^2 + 2x^3 \\ 
h(\partial \Delta) = h(\partial \Gamma) & = 
1 + 2x + 2x^2 + x^3, 
\end{align*}
\end{subequations}
so that $\theta(\Delta,x) = x + x^3$ is not 
unimodal. Note that $\partial \Delta$ is not an 
induced subcomplex of 
$\Delta$, since $\{b, c, d\}$ is an interior 
face of $\Delta$ having all its vertices on 
$\partial \Delta$. On the other hand, all facets 
of $\Delta$ have an interior vertex, namely $u$ 
or $v$. 
\qed
\end{example}

\begin{remark} \rm
Stanley proved~\cite[Theorem~5.2]{Sta92} that the 
local $h$-polynomial $\ell_V(\Gamma,x)$ is unimodal
for every regular triangulation $\Gamma$ of the 
simplex $2^V$. Moreover, he conjectured
\cite[Conjecture~5.4]{Sta92} that the same holds 
for every quasigeometric simplicial subdivision of 
$2^V$. Although the conjecture fails at this level 
of generality \cite[Example~3.4]{Ath12}, it is open 
for the class of geometric triangulations studied 
in this paper (see \cite[Question~3.5]{Ath12} for a 
class of topological simplicial subdivisions of the 
simplex for which it could be true). 
Theorem~\ref{thm:AY20}, combined with 
Theorem~\ref{thm:KMS}, shows that $\ell_V(\Gamma,x)$ 
is unimodal for every triangulation $\Gamma$ of the 
simplex $2^V$ such that for every $F \subseteq V$, 
the restriction of $\Gamma$ to $\partial (2^F)$ is 
an induced subcomplex of $\Gamma$.
\qed
\end{remark}

\subsection{Gamma-positivity} A simplicial complex 
$\Delta$ is said to be \emph{flag} if every set $F$
of vertices of $\Delta$ such that $\{u,v\} \in \Delta$ 
for all $u, v \in F$ is a face of $\Delta$. The 
$h$-polynomials of simplicial complexes seem to 
behave well with respect to $\gamma$-positivity under
the flagness condition. For example, $h$-polynomials
of flag homology spheres \cite[Conjecture~2.1.7]{Ga05} 
and local $h$-polynomials of flag triangulations of 
simplices \cite[Conjecture~5.4]{Ath12} are conjectured 
to be $\gamma$-positive. In the spirit of these 
conjectures, it seems natural to formulate the 
following flag conjectural analogue of 
Theorem~\ref{thm:AY20}. 
\begin{conjecture} 
\label{conj:theta-gamma-positivity}
The polynomial $\theta(\Delta,x)$ is 
$\gamma$-positive for every flag homology ball
$\Delta$, such that $\partial \Delta$ is an 
induced subcomplex of $\Delta$.
\end{conjecture}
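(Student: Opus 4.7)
The plan is to prove Conjecture~\ref{conj:theta-gamma-positivity} by showing it is equivalent to the Link Conjecture of Chudnovsky and Nevo~\cite{CN20}, which asserts that for every flag homology sphere $\Sigma$ and every vertex $v$, the coordinatewise inequality $\gamma_i(\link_\Sigma(v)) \le \gamma_i(\Sigma)$ holds for all $i$. Since the Link Conjecture strengthens Gal's conjecture and is itself open, this is a reduction rather than a complete proof; but it places Conjecture~\ref{conj:theta-gamma-positivity} firmly within an existing web of well-studied $\gamma$-positivity problems and, as the author observes in the introduction, advocates for its validity.

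The first step is a computation of $\theta$ for antistars. Let $\Sigma$ be an $(n-1)$-dimensional flag homology sphere, let $v$ be a vertex, and let $\Delta = \{F \in \Sigma : v \notin F\}$ be the antistar of $v$. As an induced subcomplex of a flag sphere, $\Delta$ is a flag homology ball; moreover $\partial \Delta = \link_\Sigma(v)$ is induced in $\Delta$ (an edge between two vertices of the link lies in the link by flagness of $\Sigma$). A direct face count gives $f_i(\Sigma) - f_i(\Delta) = f_{i-1}(\link_\Sigma(v))$, and substituting into the definition of the $h$-polynomial (together with a reindexing) yields $h(\Sigma,x) = h(\Delta,x) + x\,h(\link_\Sigma(v),x)$. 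Combining with the definition of $\theta(\Delta,x)$ produces the key identity
\[ \theta(\Delta,x) \ = \ h(\Sigma,x) \, - \, (1+x)\,h(\link_\Sigma(v),x). \]
Since $h(\Sigma,x)$ and $(1+x)\,h(\link_\Sigma(v),x)$ are both symmetric with center $n/2$, expanding each in the $\gamma$-basis with respect to this center shows that $\theta(\Delta,x)$ is $\gamma$-positive if and only if $\gamma_i(\Sigma) \ge \gamma_i(\link_\Sigma(v))$ for every $i$, which is precisely the Link Conjecture.

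The second step is to show that every flag homology ball $\Delta$ with induced boundary arises as such an antistar, so that the equivalence runs in both directions. Given such $\Delta$, introduce a new vertex $v$ and set $\Sigma = \Delta \cup (v \ast \partial \Delta)$. A standard Mayer--Vietoris argument shows that $\Sigma$ is a homology sphere of dimension $n-1$ with $\link_\Sigma(v) = \partial \Delta$ and with antistar of $v$ equal to $\Delta$. The crucial verification is flagness of $\Sigma$. Let $F \subseteq V(\Sigma)$ satisfy $\{u,w\} \in \Sigma$ for all $u,w \in F$. If $v \notin F$, then $F \subseteq V(\Delta)$ with all edges in $\Delta$, so $F \in \Delta$ by flagness of $\Delta$. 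If $v \in F$, write $F = \{v\} \cup F'$ with $F' \subseteq V(\partial \Delta)$; every edge of $F'$ lies in $\Delta$, and by the inducedness of $\partial \Delta$ in $\Delta$ every such edge in fact lies in $\partial \Delta$. Since induced subcomplexes of flag complexes are flag, $\partial \Delta$ is itself flag, whence $F' \in \partial \Delta$ and $F \in v \ast \partial \Delta \subseteq \Sigma$.

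The principal obstacle to the overall program is, of course, the Link Conjecture itself, which remains open even in most cases where Gal's conjecture is known. The true content of the proposed argument is therefore the clean bi-implication: the inducedness hypothesis in Conjecture~\ref{conj:theta-gamma-positivity} is precisely what makes the cone construction produce a flag sphere, and the identity $\theta(\Delta,x) = h(\Sigma,x) - (1+x)\,h(\link_\Sigma(v),x)$ identifies the two conjectures on the nose. As an immediate bonus, the special cases of the Link Conjecture that are already known in the literature (for instance, for barycentric subdivisions of flag spheres, or for flag spheres of small dimension) translate directly into corresponding cases of Conjecture~\ref{conj:theta-gamma-positivity}.
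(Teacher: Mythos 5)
Your argument does not (and cannot) prove the statement, which is an open conjecture: it reduces it to the Chudnovsky--Nevo Link Conjecture, and that reduction is precisely the content of the paper's own Proposition~\ref{prop:theta-gamma-positivity-a}, established there by the same antistar/cone construction and the same identity $\theta(\Gamma,x) = h(\Delta,x) - (1+x)\,h(\link_\Delta(v),x)$. So your equivalence is correct and matches the paper's treatment essentially step for step; just present it as an equivalence of conjectures rather than as a proof.
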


The following example shows that, once again, the 
condition that $\partial \Delta$ is an induced 
subcomplex of $\Delta$ cannot be replaced by the
interior vertex property.
\begin{example} \label{ex:conj-gamma-positivity} 
\rm Consider the boundary complexes $\Delta_1$ 
and $\Delta_2$ of two octahedra, which are glued 
along a common facet $F$, and let $\Delta$ be the 
union of the cones of $\Delta_1$ and $\Delta_2$ on 
two vertices $u_1$ and $u_2$. Thus, $\Delta$ is
a three-dimensional flag simplicial ball with eleven
vertices and sixteen facets which has the interior 
vertex property (every facet contains either $u_1$ 
or $u_2$). The boundary $\partial \Delta$ has nine 
vertices and fourteen facets, and
\begin{subequations}
\begin{align*}
h(\Delta,x)  & = 1 + 7x + 6x^2 + 2x^3 \\ 
h(\partial \Delta) & = 1 + 6x + 6x^2 + x^3, 
\end{align*}
\end{subequations}
so that $\theta(\Delta',x) = x + x^3$ is not 
$\gamma$-positive (not even unimodal). Since $F$ is 
an interior face of $\Delta$ having all its vertices 
on $\partial \Delta$, the latter is not an induced 
subcomplex of $\Delta$. \qed
\end{example}

Conjecture~\ref{conj:theta-gamma-positivity} turns
out to be equivalent to a strengthening of Gal's 
conjecture~\cite[Conjecture~2.1.7]{Ga05}, 
proposed recently by Chudnovsky and 
Nevo~\cite{CN20} in two equivalent forms (named
the Link Conjecture and the Equator Conjecture), 
as we now show. We recall that the 
\emph{$\gamma$-polynomial} of an $(n-1)$-dimensional 
homology sphere $\Delta$ is defined as $\gamma
(\Delta,x) = \sum_{i=0}^{\lfloor n/2 \rfloor}
\gamma_i(\Delta) x^i$, where $h(\Delta,x) = 
\sum_{i=0}^{\lfloor n/2 \rfloor} \gamma_i(\Delta) 
x^i (1+x)^{n-2i}$.
\begin{proposition} 
\label{prop:theta-gamma-positivity-a}
For every positive integer $n$, the following 
statements are equivalent:
\begin{itemize}
\item[(i)] 
Conjecture~\ref{conj:theta-gamma-positivity} 
holds for all homology balls of dimension $n-1$.

\item[(ii)] 
{\rm (\cite[Conjecture~1.2]{CN20})}
We have $\gamma(\Delta,x) \ge \gamma
(\link_\Delta(v),x)$ for every $(n-1)$-dimensional 
flag homology sphere $\Delta$ and every vertex $v$ 
of $\Delta$.

\item[(iii)]
{\rm (\cite[Conjecture~1.3]{CN20})}
We have $\gamma(\Delta,x) \ge \gamma(\Gamma,x)$ 
for every $(n-1)$-dimensional flag homology 
sphere $\Delta$ and every $(n-2)$-dimensional 
flag homology sphere $\Gamma$ which is an 
induced subcomplex of $\Delta$.
\end{itemize}
\end{proposition}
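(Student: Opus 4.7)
The plan is to prove (i) $\Leftrightarrow$ (iii) directly and to invoke the Chudnovsky--Nevo equivalence (ii) $\Leftrightarrow$ (iii) from \cite{CN20}. In fact, (iii) $\Rightarrow$ (ii) is immediate: for a flag homology sphere $\Delta$ and any vertex $v$, the link $\link_\Delta(v)$ is an $(n-2)$-dimensional flag homology sphere, and it is automatically an induced subcomplex of $\Delta$ (any clique of neighbors of $v$, together with $v$, is a clique in $\Delta$, hence a face by flagness).

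Both directions of (i) $\Leftrightarrow$ (iii) rest on a hemisphere decomposition identity. Suppose $\Gamma$ is an induced $(n-2)$-dimensional flag subsphere of an $(n-1)$-dimensional flag homology sphere $\Delta$. By a Jordan--Brouwer-type separation theorem (a consequence of Alexander duality), $|\Delta| \sm |\Gamma|$ has exactly two connected components, which partition the non-$\Gamma$ vertices of $\Delta$ into sets $V_1$ and $V_2$; let $\Delta_j$ be the induced subcomplex of $\Delta$ on $V(\Gamma) \cup V_j$. Flagness of $\Delta$ forces no edge of $\Delta$ to join $V_1$ to $V_2$ (otherwise its geometric realization would cross $|\Gamma|$), so $\Delta = \Delta_1 \cup \Delta_2$ with $\Delta_1 \cap \Delta_2 = \Gamma$; each $\Delta_j$ is a flag $(n-1)$-dimensional homology ball with $\partial \Delta_j = \Gamma$, induced in $\Delta_j$. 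Face-wise inclusion-exclusion, combined with $\theta(\Delta_j, x) = h(\Delta_j, x) - h(\Gamma, x)$, yields
\[ h(\Delta, x) \ = \ \theta(\Delta_1, x) + \theta(\Delta_2, x) + (1+x) \, h(\Gamma, x). \]
Expanding both sides in the basis $\{x^i(1+x)^{n-2i}\}_{i \ge 0}$, which is legitimate because each $\theta(\Delta_j, x)$ is symmetric with center $n/2$ and has zero constant term, one reads off
\[ \gamma_i(\Delta) - \gamma_i(\Gamma) \ = \ [x^i(1+x)^{n-2i}] \, \bigl( \theta(\Delta_1, x) + \theta(\Delta_2, x) \bigr) \]
for every $i \ge 0$, with the convention $\gamma_i(\Gamma) = 0$ for $i > \lfloor (n-1)/2 \rfloor$. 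Assuming (i), each $\theta(\Delta_j, x)$ is $\gamma$-positive, and (iii) follows.

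For the converse (iii) $\Rightarrow$ (i), given a flag homology ball $\Delta$ of dimension $n-1$ with $\partial \Delta$ induced, adjoin a new vertex $v$ and set $\hat{\Delta} := \Delta \cup C$, where $C$ is the cone on $\partial \Delta$ with apex $v$. The complex $\hat{\Delta}$ is flag (a clique of $\hat{\Delta}$ containing $v$ consists of $v$ together with a clique on vertices of $\partial \Delta$, which is a face by flagness of $\partial \Delta$; a clique of $\hat{\Delta}$ avoiding $v$ is a clique of $\Delta$, hence a face by flagness of $\Delta$), is an $(n-1)$-dimensional homology sphere (a ball glued to the cone on its boundary), and contains $\partial \Delta$ as an induced subsphere. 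Since $C$ is a cone, Example~\ref{ex:theta}(c) gives $\theta(C, x) = 0$, and the previous identity applied to $\hat{\Delta} = \Delta \cup C$ specializes to
\[ h(\hat{\Delta}, x) \ = \ \theta(\Delta, x) + (1+x) \, h(\partial \Delta, x), \]
whence $\theta(\Delta, x) = \sum_i \bigl( \gamma_i(\hat{\Delta}) - \gamma_i(\partial \Delta) \bigr) x^i (1+x)^{n-2i}$. Hypothesis (iii) applied to $\partial \Delta \subset \hat{\Delta}$ forces these coefficients to be nonnegative, so $\theta(\Delta, x)$ is $\gamma$-positive.

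The main obstacle is the hemisphere decomposition: while the flagness of each $\Delta_j$ and the set-theoretic identities $\Delta = \Delta_1 \cup \Delta_2$, $\Delta_1 \cap \Delta_2 = \Gamma$ follow painlessly from flagness, the identification of each $\Delta_j$ as a homology ball with boundary exactly $\Gamma$ genuinely rests on the topological fact that a codimension-one induced homology subsphere of a homology sphere separates it into two homology balls. Once this is secured, the rest is bookkeeping with the symmetric decompositions of $h$- and $\theta$-polynomials.
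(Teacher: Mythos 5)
Your route differs from the paper's: you prove (i) $\Leftrightarrow$ (iii) directly and quote (ii) $\Leftrightarrow$ (iii) from \cite{CN20}, whereas the paper proves (i) $\Leftrightarrow$ (ii) directly and quotes (ii) $\Leftrightarrow$ (iii). Your $h$-polynomial bookkeeping is correct in both directions (the identity $h(\Delta,x) = \theta(\Delta_1,x) + \theta(\Delta_2,x) + (1+x)\,h(\Gamma,x)$ and the resulting comparison of $\gamma$-coefficients check out, as does the cone construction for the converse). But the choice of (iii) as the pivot is exactly what creates your ``main obstacle,'' and that obstacle is a genuine gap as written. The hemisphere decomposition --- that an induced codimension-one flag homology subsphere $\Gamma$ of a flag homology sphere $\Delta$ cuts it into two flag homology \emph{balls} $\Delta_1, \Delta_2$ with $\partial \Delta_j = \Gamma$ --- is not a routine consequence of Alexander duality. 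Duality gives the separation of $|\Delta| \sm |\Gamma|$ into two acyclic components, and your clique argument correctly yields $\Delta = \Delta_1 \cup \Delta_2$ with $\Delta_1 \cap \Delta_2 = \Gamma$; but certifying that each $\Delta_j$ is a homology ball in the simplicial sense (the right reduced homology of $\link_{\Delta_j}(F)$ for \emph{every} face $F$, with boundary exactly $\Gamma$) requires an induction over links that is essentially the entire content of the nontrivial direction of \cite[Proposition~3.1]{CN20} --- i.e., of the very equivalence you are citing as a black box. So your proof of (i) $\Rightarrow$ (iii) either reproves part of the quoted result or leaves its key input unestablished.

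The paper avoids this by pivoting on (ii): for a vertex $v$ of a flag homology sphere $\Delta$, the antistar $\Gamma = \Delta \sm v$ is an $(n-1)$-dimensional homology ball with $\partial \Gamma = \link_\Delta(v)$ --- a standard fact about homology spheres needing no separation theorem --- and flagness of $\Delta$ makes $\Gamma$ and $\partial\Gamma$ flag with $\partial\Gamma$ induced. The elementary identity $h(\Delta,x) = h(\Gamma,x) + x\,h(\link_\Delta(v),x)$ then gives $\theta(\Gamma,x) = h(\Delta,x) - (1+x)\,h(\link_\Delta(v),x)$, so $\gamma$-positivity of $\theta(\Gamma,x)$ \emph{is} the inequality $\gamma(\Delta,x) \ge \gamma(\link_\Delta(v),x)$. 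The converse (ii) $\Rightarrow$ (i) is exactly your cone construction, which both arguments share. I recommend reorganizing accordingly: your second half already is the paper's (ii) $\Rightarrow$ (i), and your first half should be replaced by the star/antistar special case of your hemisphere identity, which needs no topological input beyond standard properties of homology spheres.
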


\begin{proof} 
The equivalence ${\rm (ii)} \Leftrightarrow 
{\rm (iii)}$ has already been proven in 
\cite[Proposition~3.1]{CN20}. To prove that 
${\rm (i)} \Rightarrow {\rm (ii)}$, we assume that 
(i) holds and consider an $(n-1)$-dimensional 
flag homology sphere $\Delta$, along with a 
vertex $v$. Then, $\Gamma := \Delta \sm v$ is 
an $(n-1)$-dimensional homology ball with 
boundary $\partial \Gamma = \link_\Delta(v)$.
The flagness of $\Delta$ implies that $\partial
\Gamma$ is an induced subcomplex of $\Gamma$
and that $\Gamma$ and $\partial \Gamma$ are 
both flag. Thus, $\theta(\Gamma, x)$ is 
$\gamma$-positive. Since, as an easy consequence 
of the definition of the $h$-polynomial, 
$h(\Delta,x) = h(\Gamma,x) + x \, 
h(\link_\Delta(v),x)$, we have
\begin{subequations}
\begin{align*} 
\theta(\Gamma,x) & = h(\Gamma,x) - h(\partial 
\Gamma,x) = h(\Gamma,x) - h(\link_\Delta(v),x) 
\\ & = h(\Delta,x) - (1+x) \, 
       h(\link_\Delta(v),x).
\end{align*}
\end{subequations}
Hence, the $\gamma$-positivity of $\theta(\Delta,x)$
exactly means that $\gamma(\Delta,x) \ge 
\gamma(\link_\Delta(v),x)$.

The proof of ${\rm (ii)} \Rightarrow {\rm (i)}$ 
is similar. Given an $(n-1)$-dimensional flag 
homology ball $\Gamma$ with induced boundary 
$\partial \Gamma$, one adds the cone of 
$\partial \Gamma$ over a new vertex $v$ to 
$\Gamma$ to obtain an $(n-1)$-dimensional flag 
homology sphere $\Delta$ with $\link_\Delta(v) 
= \partial \Gamma$. Because $\partial \Gamma$ is 
an induced subcomplex of $\Gamma$, the sphere 
$\Delta$ is also flag and hence $\gamma
(\Delta,x) \ge \gamma(\link_\Delta(v),x)$ by 
(ii). As already shown, the latter inequality 
is equivalent to the $\gamma$-positivity of 
$\theta(\Gamma,x)$. 
\end{proof}

One may also state 
Conjecture~\ref{conj:theta-gamma-positivity} in 
the language of symmetric decompositions and place
it within the phenomena of `nonsymmetric 
$\gamma$-positivity' discussed in 
\cite[Section~5.1]{Ath18}. 
\begin{proposition} 
\label{prop:theta-gamma-positivity-b}
For every positive integer $d$, the following 
statements are equivalent:
\begin{itemize}
\item[(i)] 
Conjecture~\ref{conj:theta-gamma-positivity}
holds in all dimensions less than $d$. 

\item[(ii)] 
For all $1 \le n \le d$, the polynomial 
$h(\Gamma,x)$ has a $\gamma$-positive symmetric 
decomposition with respect to $n-1$ for every 
$(n-1)$-dimensional flag homology ball $\Gamma$, 
such that $\partial \Gamma$ is an induced 
subcomplex of $\Gamma$.
\end{itemize}
\end{proposition}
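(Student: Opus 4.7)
The plan is to exploit the observation, recorded in equation~(\ref{eq:sym-decomposition-h}), that
\[ h(\Gamma,x) \ = \ h(\partial\Gamma,x) \ + \ x \cdot (\theta(\Gamma,x)/x) \]
is precisely the symmetric decomposition of $h(\Gamma,x)$ with respect to $n-1$. Its $\gamma$-positivity is by definition the simultaneous $\gamma$-positivity of the $a$-part $h(\partial\Gamma,x)$ (as a symmetric polynomial with center $(n-1)/2$) and the $b$-part $\theta(\Gamma,x)/x$ (as a symmetric polynomial with center $(n-2)/2$). A routine check shows that $\theta(\Gamma,x)/x$ is $\gamma$-positive with center $(n-2)/2$ if and only if $\theta(\Gamma,x)$ itself is $\gamma$-positive with center $n/2$: the two expansions correspond term-by-term under multiplication by $x$, and the constant term of $\theta(\Gamma,x)$, which vanishes by Proposition~\ref{prop:theta-symmetry}, forces the index $i=0$ $\gamma$-coefficient in the expansion with center $n/2$ to be zero.

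Given this reduction, the implication (ii) $\Rightarrow$ (i) is immediate: extracting the $b$-part of a $\gamma$-positive symmetric decomposition of $h(\Gamma,x)$ yields the $\gamma$-positivity of $\theta(\Gamma,x)/x$, hence of $\theta(\Gamma,x)$, which is Conjecture~\ref{conj:theta-gamma-positivity} for $\Gamma$. For the converse (i) $\Rightarrow$ (ii), hypothesis (i) already supplies the $\gamma$-positivity of $\theta(\Gamma,x)$ and thus of the $b$-part; it therefore suffices to establish the $\gamma$-positivity of the $a$-part $h(\partial\Gamma,x)$. Since $\Gamma$ is flag and $\partial\Gamma$ is induced in $\Gamma$, the boundary $\partial\Gamma$ is itself a flag homology sphere of dimension $n-2$, so the problem reduces to verifying Gal's conjecture for flag homology spheres of dimension at most $d-2$.

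For this, I will invoke Proposition~\ref{prop:theta-gamma-positivity-a}, by which hypothesis (i) is equivalent to the Link Conjecture for flag homology spheres in every dimension less than $d$. The required instance of Gal's conjecture then follows by induction on dimension: the base case (dimension $0$ or below) is trivial, and for an $(n-2)$-dimensional flag homology sphere $S$ with $n \le d$ and any vertex $v$, the Link Conjecture gives $\gamma(S,x) \ge \gamma(\link_S(v),x)$, while $\link_S(v)$ is a flag homology sphere of strictly smaller dimension whose $\gamma$-polynomial is nonnegative by the inductive hypothesis. Applying this with $S = \partial\Gamma$ completes the proof. The only real subtlety is the bookkeeping around centers of symmetry and the identification $b(x) = \theta(\Gamma,x)/x$; once this is in hand, the argument is essentially formal, combining Proposition~\ref{prop:theta-gamma-positivity-a} with the elementary reduction of Gal's conjecture from the Link Conjecture.
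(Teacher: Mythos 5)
Your proof is correct and follows essentially the same route as the paper's: reduce the $\gamma$-positivity of the symmetric decomposition to that of $h(\partial\Gamma,x)$ and $\theta(\Gamma,x)$, and then derive Gal's conjecture for the induced flag boundary sphere from the Link Conjecture (via Proposition~\ref{prop:theta-gamma-positivity-a}) by induction on dimension. The only difference is that you spell out the induction that the paper delegates to a citation of \cite{CN20}.
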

 
\begin{proof} 
As already discussed, the $\gamma$-positivity of 
the symmetric decomposition 
of $h(\Gamma,x)$ with respect to $n-1$ is 
equivalent to the $\gamma$-positivity of 
$\theta(\Gamma,x)$ and $h(\partial \Gamma,x)$. 
Thus, we only need to verify that condition (i) 
implies the $\gamma$-positivity of the 
$h$-polynomials of flag homology spheres of 
dimension less than $d$. This follows from 
Proposition~\ref{prop:theta-gamma-positivity-a}
since, as noted in~\cite{CN20}, the validity of 
condition (ii) there for all $n \le d$ implies the
validity of Gal's conjecture in all dimensions less
than $d$ by induction on the dimension.
\end{proof}

\begin{remark} \rm
Conjecture~\ref{conj:theta-gamma-positivity} would
imply, in view of Theorem~\ref{thm:KMS}, that 
$\ell_V(\Gamma,x)$ is $\gamma$-positive for every 
flag triangulation $\Gamma$ of the 
simplex $2^V$ such that for every $F \subseteq V$, 
the restriction of $\Gamma$ to $\partial (2^F)$ is 
an induced subcomplex of $\Gamma$ (thus verifying 
\cite[Conjecture~5.4]{Ath12} in this situation). 
\qed
\end{remark}

Conjecture~\ref{conj:theta-gamma-positivity} is 
true, among other special cases, when $\Delta$ is:
\begin{itemize}
\item[$\bullet$] 
the barycentric subdivision of any regular
cell decomposition of a ball, by results of Karu 
and Ehrenborg~\cite{EK07} (see 
\cite[Section~4]{KMS19} for a detailed explanation
and \cite[Theorem~3.9]{Ath18}), 

\item[$\bullet$] 
the $r$-fold edgewise subdivision (for $r \ge n$)
and the $r$-colored barycentric subdivision of any 
$(n-1)$-dimensional homology ball; see part (b) of 
Corollaries~5.1 and 5.4 in~\cite{AT21},

\item[$\bullet$] 
the antiprism triangulation of any $(n-1)$-dimensional 
homology ball; see Section~\ref{sec:app}.
\end{itemize}

The stronger property that $\theta(\Delta,x)$ has 
only real roots is proven for edgewise subdivisions 
and $r$-colored barycentric subdivisions
in~\cite{AT21}. Further evidence in favor of 
Conjecture~\ref{conj:theta-gamma-positivity} can be 
found in~\cite{CN20}.

\section{Antiprism triangulations}
\label{sec:app}

The antiprism triangulation of a simplicial complex 
$\Delta$, denoted by $\sd_\aA(\Delta)$, was 
first considered by Izmestiev and Joswig~\cite{IJ03} 
in the context of branched coverings of manifolds. 
As a simplicial complex, it may be defined to have 
vertices the pointed faces $(F,v)$ of $\Delta$, 
meaning pairs of faces $F \in \Delta$ and vertices 
$v \in F$, and faces the sets consisting of the 
pointed faces $(F_1,v_1), (F_2,v_2),\dots,(F_m,v_m)$
of $\Delta$, such that 
\begin{itemize}
\item[$\bullet$] 
$F_1 \subseteq F_2 \subseteq \cdots \subseteq F_m$
and 

\item[$\bullet$] 
$F_i \subsetneq F_j \Rightarrow v_j \in F_j \sm 
F_i$, for $i < j$. 
\end{itemize}
This simplicial complex naturally triangulates 
$\Delta$; the carrier of the face consisting of
$(F_1,v_1), (F_2,v_2),\dots,(F_m,v_m)$, as above, 
is the face $F_m \in \Delta$. 

The combinatorial properties of antiprism 
triangulations were studied systematically 
in~\cite{ABK22} and shown to have similarities to 
those of barycentric subdivision, but often to be 
more challenging
to analyze. For example, it is an open problem 
\cite[Conjecture~1.1]{ABK22} to decide whether the
$h$-polynomial of $\sd_\aA(\Delta)$ is real-rooted
for every Cohen--Macaulay simplicial complex 
$\Delta$. The key property that 
$\theta(\sd_\aA(2^V),x)$ has 
only real roots (and hence is unimodal and 
$\gamma$-positive) for every simplex $2^V$ was 
proven in~\cite{ABK22} (see Theorem~5.2 there).
Given that, we can reprove the main result 
of~\cite{ABK22} about the unimodality of 
$h(\sd_\aA(\Delta),x)$ and answer in the 
affirmative some of the questions posed 
in~\cite[Section~8]{ABK22} (see Remarks~3 
and~4 there). Part (b) of the following 
corollary verifies Gal's conjecture in a new 
special case, namely that of antiprism 
triangulations of homology spheres.
\begin{corollary} \label{cor:h-antiprism}
The polynomial $h(\sd_\aA(\Delta), x)$:  
\begin{itemize}
\item[(a)] 
{\rm (cf. \cite[Theorem~1.3]{ABK22})}
is unimodal, with a peak at position $n/2$, 
if $n$ is even, and at $(n-1)/2$ or $(n+1)/2$, 
if $n$ is odd, for every Cohen--Macaulay 
$(n-1)$-dimensional simplicial complex 
$\Delta$;

\item[(b)] 
is $\gamma$-positive for every 
$(n-1)$-dimensional homology sphere $\Delta$;

\item[(c)]
has a $\gamma$-positive symmetric decomposition 
with respect to $n$ for every $(n-1)$-dimensional 
Cohen--Macaulay* simplicial complex $\Delta$;

\item[(d)]
has a $\gamma$-positive symmetric decomposition with 
respect to $n-1$ for every $(n-1)$-dimensional 
homology ball $\Delta$.
\end{itemize}
\end{corollary}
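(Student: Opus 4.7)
The plan is to reduce all four parts of the corollary to the general machinery developed in Section~\ref{sec:theta}, specifically Corollaries~\ref{cor:unimodal-h} and~\ref{cor:sym-decomposition-h}, once we verify the single structural input that $\sd_\aA(\Delta)$ is a theta $\gamma$-positive triangulation of $\Delta$ in the sense of Section~\ref{sec:theta}.

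The first step is to observe that the definition of $\sd_\aA(\Delta)$ is local in the carrier: for every face $F \in \Delta$, the restriction $(\sd_\aA(\Delta))_F$ coincides with the antiprism triangulation $\sd_\aA(2^F)$ of the simplex $2^F$. This is immediate from the chain description of faces of $\sd_\aA(\Delta)$, since a face $\{(F_1,v_1),\dots,(F_m,v_m)\}$ has carrier $F_m$, so those with carrier contained in $F$ are exactly the ones whose labels satisfy $F_i \subseteq F$.

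The second step is to check that each $\sd_\aA(2^F)$ is theta $\gamma$-positive. The polynomial $\theta(\sd_\aA(2^F),x)$ is symmetric by Proposition~\ref{prop:theta-symmetry}; it has nonnegative coefficients because $\sd_\aA(2^F)$ has the interior vertex property (any pointed face of the form $(F,v)$ is interior, and every facet of $\sd_\aA(2^F)$ arises from a maximal chain ending at $F$, hence contains such a vertex), so Proposition~\ref{prop:theta-positivity} applies; and it is real-rooted by Theorem~5.2 of~\cite{ABK22}. A symmetric polynomial with nonnegative coefficients that is real-rooted is automatically $\gamma$-positive. Combined with step one, this shows that $\sd_\aA(\Delta)$ is a theta $\gamma$-positive (and in particular, theta unimodal) triangulation of any simplicial complex $\Delta$.

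With this in hand, the four parts follow mechanically. Part (a) is Corollary~\ref{cor:unimodal-h}(a) applied to $\Delta' = \sd_\aA(\Delta)$, using that theta $\gamma$-positivity implies theta unimodality. Parts (b), (c) and (d) are, respectively, the three cases of Corollary~\ref{cor:sym-decomposition-h} applied to $\sd_\aA(\Delta)$. I expect no genuine obstacle in this argument: once the locality identity $(\sd_\aA(\Delta))_F = \sd_\aA(2^F)$ and the real-rootedness from~\cite{ABK22} are invoked, everything else is a direct citation of the general results already proved. The only point requiring a brief justification is the verification of the interior vertex property for $\sd_\aA(2^F)$, which is needed to conclude theta positivity before upgrading to theta $\gamma$-positivity.
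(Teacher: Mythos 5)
Your proposal is correct and follows essentially the same route as the paper: both reduce the corollary to Corollary~\ref{cor:unimodal-h}(a) and Corollary~\ref{cor:sym-decomposition-h} via the theta $\gamma$-positivity (hence theta unimodality) of antiprism triangulations, which rests on the real-rootedness of $\theta(\sd_\aA(2^V),x)$ from \cite{ABK22}. The only difference is that you spell out the locality identity $(\sd_\aA(\Delta))_F = \sd_\aA(2^F)$ and the interior vertex property, which the paper leaves to the citation of \cite{ABK22}; both verifications are correct.
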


\begin{proof} 
This follows from Corollary~\ref{cor:unimodal-h} 
(a), Corollary~\ref{cor:sym-decomposition-h} and 
the theta unimodality and theta
$\gamma$-positivity of antiprism triangulations
of simplicial complexes, established 
in~\cite{ABK22}.
\end{proof}

\begin{corollary} \label{cor:ell-antiprism}
The polynomial $\ell_V(\sd_\aA(\Gamma), x)$ is 
$\gamma$-positive for every triangulation $\Gamma$ 
of the simplex $2^V$.  
\end{corollary}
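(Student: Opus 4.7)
The plan is to chain Theorem~\ref{thm:KMS} with Theorem~\ref{thm:theta-monotone-b}(b) on top of the theta $\gamma$-positivity of antiprism triangulations of simplices established in~\cite{ABK22}. First, I would observe that $\sd_\aA(\Gamma)$, viewed as a triangulation of the simplex $2^V$ (through $\Gamma$), restricts to each $F \subseteq V$ as $\sd_\aA(\Gamma_F)$, where $\Gamma_F$ is the restriction of $\Gamma$ to $F$. This is a routine check involving the transitivity of carriers under composition of triangulations. Applying Theorem~\ref{thm:KMS} to $\sd_\aA(\Gamma)$ then yields
\[ \ell_V(\sd_\aA(\Gamma),x) \ = \ \sum_{F \subseteq V} \theta(\sd_\aA(\Gamma_F),x) \, d_{|V \sm F|}(x). \]

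Next, I would argue that each factor $\theta(\sd_\aA(\Gamma_F),x)$ is $\gamma$-positive. Since $\Gamma_F$ is a triangulation of the simplex $2^F$, hence a homology ball, Theorem~\ref{thm:theta-monotone-b}(b) applies provided $\sd_\aA(\Gamma_F)$ is a theta $\gamma$-positive triangulation of $\Gamma_F$. For any face $G \in \Gamma_F$, the relevant restriction equals $\sd_\aA(2^G)$, whose theta polynomial is $\gamma$-positive (indeed real-rooted) by Theorem~5.2 of~\cite{ABK22}. Thus the hypothesis of Theorem~\ref{thm:theta-monotone-b}(b) holds and we obtain the $\gamma$-positivity of $\theta(\sd_\aA(\Gamma_F),x)$ for every $F \subseteq V$.

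To conclude, I would use that derangement polynomials are $\gamma$-positive and that products of $\gamma$-positive polynomials remain $\gamma$-positive. Each summand $\theta(\sd_\aA(\Gamma_F),x) \, d_{|V \sm F|}(x)$ is therefore $\gamma$-positive, with center of symmetry $|F|/2 + |V \sm F|/2 = |V|/2$, independent of $F$. Since sums of $\gamma$-positive polynomials sharing a common center remain $\gamma$-positive, the conclusion follows. I do not foresee any serious obstacle: all the substantive work has been packaged into Theorems~\ref{thm:KMS} and~\ref{thm:theta-monotone-b} and into the key input from~\cite{ABK22}, so the proof amounts to assembling these ingredients, the only non-cosmetic bookkeeping being the identification $(\sd_\aA(\Gamma))_F = \sd_\aA(\Gamma_F)$.
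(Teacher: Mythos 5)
Your proposal is correct and follows essentially the same route as the paper: the paper simply cites Corollary~\ref{cor:unimodal-local-h} (whose proof is exactly your combination of Theorem~\ref{thm:KMS} with Theorem~\ref{thm:theta-monotone-b}) together with the theta $\gamma$-positivity of antiprism triangulations from~\cite{ABK22}. You have merely inlined that intermediate corollary, including the key identification $(\sd_\aA(\Gamma))_F = \sd_\aA(\Gamma_F)$.
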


\begin{proof} 
This follows from 
Corollary~\ref{cor:unimodal-local-h} and the 
theta $\gamma$-positivity of antiprism 
triangulations of simplicial complexes, established 
in~\cite{ABK22}.
\end{proof}

\begin{question} \label{que:real-sd-antiprism}
Is $\ell_V(\sd(\Gamma), x)$ real-rooted for every 
triangulation $\Gamma$ of the simplex $2^V$? Is 
$\ell_V(\sd_\aA(\Gamma), x)$ real-rooted for every 
triangulation $\Gamma$ of $2^V$? 
\end{question}

\section*{Acknowledgments}

The author wishes to thank Satoshi Murai and 
Isabella Novik for pointing out that 
Theorem~\ref{thm:AY20} follows from 
\cite[Theorem~50]{AY20} and the $g$-theorem and
the anonymous referee for useful comments.


\begin{thebibliography}{99}
%
\bibitem{Ad18}
K.~Adiprasito,
\textit{Combinatorial Lefschetz theorems beyond 
positivity},
{\tt arXiv:1812.10454}.
%
\bibitem{APP21}
K.~Adiprasito, S.A.~Papadakis and V.~Petrotou,
\textit{Anisotropy, biased pairings and the Lefschetz
property for pseudomanifolds and cycles},
{\tt arXiv:2101.07245}.
%
\bibitem{AY20}
K.~Adiprasito and G.~Yashfe,
\textit{The partition complex: an invitation to 
combinatorial commutative algebra},
in \textit{Surveys in Combinatorics 2021}, 
London Math. Soc. Lecture Notes Ser. {\bf~470}, 
Cambridge University Press, 
Cambridge, 2021, pp.~1--41.
%
\bibitem{Ath12}
C.A.~Athanasiadis,
\textit{Flag subdivisions and $\gamma$-vectors},
Pacific J. Math. {\bf~259} (2012), 257--278.
%
\bibitem{Ath18}
C.A.~Athanasiadis,
\textit{Gamma-positivity in combinatorics and geometry},
S\'em. Lothar. Combin. {\bf~77} (2018), Article B77i, 
64pp (electronic).
%
\bibitem{Ath20+}
C.A.~Athanasiadis, 
\textit{Face numbers of uniform triangulations of 
simplicial complexes},
Int. Math. Res. Not. {\bf~2022} (2022), 15756--15787.
%
\bibitem{ABK22}
C.A.~Athanasiadis, J-M.~Brunink and M.~Juhnke-Kubitzke,
\textit{Combinatorics of antiprism triangulations},
Discrete Comput. Geom. {\bf~68} (2022), 72--106.
%
\bibitem{AT21}
C.A.~Athanasiadis and E.~Tzanaki,
\textit{Symmetric decompositions, triangulations and 
real-rootedness},
Mathematika {\bf~67} (2021), 840--859.
%
\bibitem{BJM19}
M.~Beck, K.~Jochemko and E.~McCullough,
\textit{$h^*$-polynomials of zonotopes},
Trans. Amer. Math. Soc. {\bf~371} (2019), 2021--2042.
%
\bibitem{Bj95}
A.~Bj\"orner,
\textit{Topological methods}, 
in \textit{Handbook of Combinatorics}
(R.L.~Graham, M.~Gr\"otschel and L.~Lov\'asz, eds.),
North Holland, Amsterdam, 1995, pp.~1819--1872.
%
\bibitem{Bra15}
P.~Br\"and\'en,
\textit{Unimodality, log-concavity, real-rootedness 
and beyond},
in \textit{Handbook of Combinatorics} (M.~Bona, ed.), 
CRC Press, 2015, pp.~437--483.
%
\bibitem{BS20}
P.~Br\"and\'en and L.~Solus,
\textit{Symmetric decompositions and real-rootedness},
Int. Math. Res. Not. {\bf~2021} (2021), 7764--7798.
%
\bibitem{BW08} 
F.~Brenti and V.~Welker, 
\textit{$f$-vectors of barycentric subdivisions}, 
Math. Z. {\bf~259} (2008), 849--865.
%
\bibitem{CN20}
M.~Chudnovsky and E.~Nevo,
\textit{Induced equators in flag spheres},
J. Combin. Theory Series A {\bf~176} (2020), 
Article 105283.
%
\bibitem{EK07}
R.~Ehrenborg and K.~Karu,
\textit{Decomposition theorem for the cd-index of 
Gorenstein* posets},
J. Algebr. Comb. {\bf~26} (2007), 225--251.
%
\bibitem{Ga05}
S.R.~Gal,
\textit{Real root conjecture fails for five- 
and higher-dimensional spheres},
Discrete Comput. Geom. {\bf~34} (2005), 269--284.
%
\bibitem{IJ03}
I.~Izmestiev and M.~Joswig,
\textit{Branching coverings, triangulations and 
3-manifolds},
Adv. Geom. {\bf~3} (2003), 191--225.
%
\bibitem{KMS19}
M.~Juhnke-Kubitzke, S.~Murai and R.~Sieg,
\textit{Local $h$-vectors of quasi-geometric and 
barycentric subdivisions},
Discrete Comput. Geom. {\bf~61} (2019), 364--379.
%
\bibitem{KX22}
K.~Karu and E.~Xiao,
\textit{On the anisotropy theorem of Papadakis 
and Petrotou},
Algebr. Comb. {\bf~6} (2023), 1313--1330.
%
\bibitem{KN09}
M.~Kubitzke and E.~Nevo,
\textit{The Lefschetz property for barycentric 
subdivisions of shellable complexes},
Trans. Amer. Math. Soc. {\bf~361} (2009), 6151--6163.
%
\bibitem{MM16} 
N.~Matsuoka and S.~Murai, 
\textit{Uniformly Cohen--Macaulay and Gorenstein* 
simplicial complexes}, 
J. Algebra {\bf~455} (2016), 14--31.
%
\bibitem{Mu10}
S.~Murai,
\textit{On face vectors of barycentric subdivisions
of manifolds}, 
SIAM J. Discrete Math. {\bf~24} (2010), 1019--1037.
%
\bibitem{MY14}
S.~Murai and K.~Yanagawa,
\textit{Squarefree $P$-modules and the 
$\bold{cd}$-index}, 
Adv. Math. {\bf~265} (2014), 241--279.
%
\bibitem{PP20}
S.A.~Papadakis and V.~Petrotou,
\textit{The characteristic 2 anisotropicity of 
simplicial spheres},
{\tt arXiv: 2012.09815}.
%
\bibitem{Sta89}
R.P.~Stanley,
\textit{Log-concave and unimodal sequences in 
algebra, combinatorics, and geometry},
in \textit{Graph Theory and its Applications: East 
and West}, 
Annals of the New York Academy of Sciences {\bf~576}, 
New York Acad. Sci., New York, 1989, pp.~500--535.
%
\bibitem{Sta92}
R.P.~Stanley,
\textit{Subdivisions and local $h$-vectors},
J. Amer. Math. Soc. {\bf~5} (1992), 805--851.
%
\bibitem{Sta93}
R.P.~Stanley,
\textit{A monotonicity property of $h$-vectors 
and $h^*$-vectors},
European J. Combin. {\bf~14} (1993), 251--258.
%
\bibitem{StaCCA}
R.P.~Stanley,
Combinatorics and Commutative Algebra,
second edition, Birkh\"auser, Basel, 1996.
%
%
\end{thebibliography}
\end{document}